\numberwithin{equation}{section}
\theoremstyle{plain}
\newtheorem{theorem}[equation]{Theorem}
\newtheorem{lemma}[equation]{Lemma}
\newtheorem{corollary}[equation]{Corollary}
\theoremstyle{remark}
\newtheorem{remark}[equation]{Remark}
\theoremstyle{definition}
\newtheorem{definition}[equation]{Definition}
\newtheorem{defn}[equation]{Definition}
\newtheorem*{question*}{Question}
\def\vint_#1{\mathchoice%
        {\mathop{\kern 0.2em\vrule width 0.6em height 0.69678ex depth -0.58065ex
                \kern -0.8em \intop}\nolimits_{\kern -0.4em#1}}%
        {\mathop{\kern 0.1em\vrule width 0.5em height 0.69678ex depth -0.60387ex
                \kern -0.6em \intop}\nolimits_{#1}}%
        {\mathop{\kern 0.1em\vrule width 0.5em height 0.69678ex
            depth -0.60387ex
                \kern -0.6em \intop}\nolimits_{#1}}%
        {\mathop{\kern 0.1em\vrule width 0.5em height 0.69678ex depth -0.60387ex
                \kern -0.6em \intop}\nolimits_{#1}}}
\def\vintslides_#1{\mathchoice%
        {\mathop{\kern 0.1em\vrule width 0.5em height 0.697ex depth -0.581ex
                \kern -0.6em \intop}\nolimits_{\kern -0.4em#1}}%
        {\mathop{\kern 0.1em\vrule width 0.3em height 0.697ex depth -0.604ex
                \kern -0.4em \intop}\nolimits_{#1}}%
        {\mathop{\kern 0.1em\vrule width 0.3em height 0.697ex depth -0.604ex
                \kern -0.4em \intop}\nolimits_{#1}}%
        {\mathop{\kern 0.1em\vrule width 0.3em height 0.697ex depth -0.604ex
                \kern -0.4em \intop}\nolimits_{#1}}}
\newcommand{\fint}{\avint}
\title[Isoperimetric inequality for non-self-similar Sierpi\'nski sponges]{Isoperimetric and Poincar\'e inequalities on non-self-similar Sierpi\'nski sponges:\ the borderline case}
\date{\today}
\author{Sylvester Eriksson-Bique}
\thanks{S.E.-B.~was partially supported by
NSF grant DMS\#--1704215 (U.S.) and Finnish Academy grant \#--345005.}
\address{S.E--B.: Research Unit of Mathematical Sciences,
P.O.Box 3000,
FI-90014 Oulu, Finland}
\email{sylvester.eriksson-bique@oulu.fi}
\author{Jasun Gong}
\address{J.G.: Department of Mathematics, Fordham University, 441 E.\ Fordham Road, Bronx, NY 10458-5165, United States}
\email{jgong7@fordham.edu}
\subjclass[2010]{26A45,30L99 (28A75,28A80,31E05)}
\keywords{Isoperimetry, Sierpi\'nski carpet, Self-improvement, Poincar\'e}
\newcounter{prob}
\newcommand{\N}{\ensuremath{\mathbb{N}}}
\newcommand{\R}{\ensuremath{\mathbb{R}}}
\newcommand{\cD}{\ensuremath{\mathcal{D}}}
\newcommand{\nseq}{\ensuremath{{\bf n}}}
\newcommand{\diam}{\operatornamewithlimits{ \rm diam}}
\newcommand{\defeq}{\mathrel{\mathop:}=}
\newcommand{\mT}{\ensuremath{\mathcal{T}}}
\newcommand{\mR}{\ensuremath{\mathcal{R}}}
\newcommand{\nmR}{\ensuremath{\overline{\mathcal{R}}}}
\newcommand{\Ha}{{\mathcal H}}
\def\Xint#1{\mathchoice
{\XXint\displaystyle\textstyle{#1}}%
{\XXint\textstyle\scriptstyle{#1}}%
{\XXint\scriptstyle\scriptscriptstyle{#1}}%
{\XXint\scriptscriptstyle\scriptscriptstyle{#1}}%
\!\int}
\def\XXint#1#2#3{{\setbox0=\hbox{$#1{#2#3}{\int}$ }
\vcenter{\hbox{$#2#3$ }}\kern-.58\wd0}}
\def\avint{\Xint-}
\newcommand{\co}{\mskip0.5mu\colon\thinspace}   
\begin{document}

\maketitle

%

\begin{abstract}
%
In this paper we construct a large family of examples of subsets of Euclidean space that support a $1$-Poincar\'e inequality yet have empty interior.  
These examples are formed from 
{
an iterative process that involves removing well-behaved domains, or more precisely, domains whose complements are uniform 
in the sense of Martio and Sarvas.
}

%

While existing arguments rely on explicit constructions of Semmes families of curves, we 
include a new way of obtaining Poincar\'e 
inequalities 
through the use of relative isoperimetric inequalities, after Korte and Lahti. To do so, we further introduce 
the notion of 
of isoperimetric inequalities at given 
density
levels and a way to iterate such inequalities. These tools are presented and apply to general metric measure measures.  

Our examples subsume the previous results of Mackay, Tyson, and Wildrick
regarding non-self similar Sierpi\'nski carpets, and extend them to many more general shapes as well as higher dimensions.

\end{abstract}

\section{Introduction}

A $p$-Poincar\'e inequality, in the sense of \cite{heinonen1998quasiconformal}, captures the notion of possessing many 
(rectifiable) 
curves in a space that connect prescribed pairs of points -- an idea made precise in \cite{semmescurves, keith2003modulus} for example. 
A smaller exponent $p$ for a $p$-Poincar\'e inequality indicates a 
richer supply
of curves, and our focus will be on the borderline case --- that is, the
$1$-Poincar\'e inequality. 

Alternatively, such inequalities are 
related to
how easy it is to separate the space by "small" sets -- 
i.e.\ the role of isoperimetry. 
Specifically, we consider the notion for {\em relative} isoperimetry, and how boundaries separate a set from its complement; for a precise formulation of these
notions,
see 
Section \ref{sec:isoperim}. Our context will mainly be Euclidean spaces in all dimensions $d \geq 2$, though many techniques 
on isoperimetry 
are completely general
and apply to the metric space setting. 

In passing from a given space to a subset of that space, the number of curves decreases and it becomes easier to separate 
the subset (as a space, in its own right). Thus, a subset $A\subset \R^d$ often will not support a Poincar\'e inequality. If, however, 
one 
removes a collection of
sufficiently ``sparse'' obstacles from the underlying space, then intuitively the Poincar\'e inequality could be preserved for the subset. 
{
It is a subtle issue, however, of ``how sparse'' these obstacles can be.}
Our main result, Theorem \ref{thm:mainthm}, gives a general sufficient condition for a $1$-Poincar\'e inequality to hold for subsets of $\R^d$ arising from such a removal process.
  
In the case of $\R^2$, 
it was shown in \cite[
Theorems 1.5--1.6]{mackaytysonwildrick} that a certain family of positive (Lebesgue) measured subsets 
satisfy $p$-Poincar\'e inequalities, despite having empty interior. Their results are remarkable,
in that they give sharp characterizations of
the range of exponents $p$ for 
which
the $p$-Poincar\'e inequality
holds. Our work here employs substantially different techniques, and yields a more general class of examples for the exponent $p=1$. See Appendix \ref{app:explicit} for a more detailed discussion.

An earlier work by the authors \cite{bigpaper} studied 
the case of exponents $p > 1$ separately and uses completely different techniques, both from here and from \cite{mackaytysonwildrick}.
In a similar theme, however,
our results here hinge on new sufficient conditions for a $1$-Poincar\'e inequality. Together, they form a complete generalization of the main result of \cite{mackaytysonwildrick}.  


In our main theorem, we make a step towards understanding 
which removal processes are permitted, when forming these subsets.
To that end, we adopt a new perspective on isoperimetric inequalities. Instead of testing isoperimetry for all sets, we first require
the inequality to hold only for sufficiently ``dense'' sets, as measured by a density parameter $\tau$. The new notion of a $(\tau,C)$-isoperimetric inequality allows 
the flexibility of
proving ``sufficiently good'' estimates at every scale, which when iterated,
{
leads to isoperimetry for all sets and at all densities.}
Indeed, this added flexibility allows us to consider each scale independent of others, and is the crucial tool in 
our proof.

The $(\tau,C)$-isoperimetric inequality can be further thought of as a scale-invariant weak estimate that improves itself. This idea of self-improvement is frequent in harmonic analysis and geometric analysis and has appeared, for example, in the following classical contexts. 
The results often involve mild topological assumptions and to obtain them, one iterates the relevant estimate in an appropriate case-dependent way.  
\begin{enumerate}
\item Muckenhoupt $(\epsilon,\delta)$-conditions improve to $A_p$-type conditions \cite[Proposition V.4]{steinharmonic}. 
\item  Weak quasisymmetries are quasisymmetries \cite[Lemma 6.5]{vaisalaquasi}.
\item A ``balled'' Loewner condition improves to a Loewner condition \cite[Proposition 3.1]{bonkkleiner}.
\item  The Loewner condition improves to a more quantitative estimate \cite[Theorem 3.6]{heinonen1998quasiconformal}. 
\item ``Weak''-type Poincar\'e conditions at a given level, improve to true $p$-Poincar\'e inequalities for some $p>1$ \cite[Theorems 1.2 and 1.8]{sylvester:poincare}. See also \cite[Theorem 2.19]{bigpaper} for a more quantitative version.
\end{enumerate}

\subsection{Subsets arising from 
removing obstacles}

The sets $S$ we consider arise by removing "obstacles'' $R$ from a set $\Omega$, and thus are of the form 
$$
S=\Omega \setminus \bigcup_{k \in \N} \bigcup_{R \in \mathcal{R}_{\nseq,k}} R,
$$
where $\Omega$ is a so-called ``uniform'' domain in $\R^d$ and each $\mathcal{R}_{\nseq,k}$ is a collection of ``co-uniform'' domains $R$; that is, 
each $\R^d \setminus R$ is uniform and $\partial R$ is connected.

For the 
precise notion
of a uniform domain, see Definition \ref{def:uniform}; for the moment, however, we note that these include convex sets with bounded eccentricity, or regions without cusps. In particular, planar 
domains whose boundaries
are 
quasicircles are also uniform, see \cite[Remark 4.16]{bigpaper} for the definitions of a quasicircle, some references on such examples. (As a technical point, in this paper we allow uniform domains to be closed sets.)

The notion of a uniformly 
sparse 
collections of co-uniform domains 
was introduced in \cite[Definition 4.21]{bigpaper}
and
forms the starting point for our analysis.
Below, for sets $K$ and $K'$ we denote their ``distance'' by
$
d(K,K') \defeq
\inf\{ d(x,x') : x \in K, x' \in K' \}
$.
 
\begin{defn} \label{def:sparsecoll} Let $\Omega$ be a non-empty  
compact
subset 
of $\R^d$.
Let $\nseq = \{n_k\}_{k=1}^\infty$ be a sequence of positive integers, and consider scales
$s_0=\diam(\Omega)$ and
$$
s_k = \frac{1}{n_k}s_{k-1}
$$
for $k \in \N$. 
A sequence of collections of domains $\{\mR_{\nseq,k}\}_{k=1}^\infty$ in $\Omega \subset \R^d$ forms a {\sc uniformly ($\nseq$-)sparse collection of co-uniform domains  in $\Omega$ 
} if there are constants $\delta \in (0,1)$ and $L, A > 0$ so that
for each $R \in \mR_{\nseq,k}$:\
\begin{enumerate}
 \item $R \subset \Omega$;
 \item $R$ is $A$-co-uniform and $\Omega$ is $A$-uniform;
 \item $\diam(R) \leq Ls_k $;
 \item $
 {
 d(R,\Omega^c)
 }
 \geq \delta s_{k-1}$;  
 \item for each $R' \in \mR_{\nseq,k'}$ with $k \geq k'$, then
$d(R,R') \geq \delta s_{k-1}$;
\end{enumerate}
Moreover, 
call
$\{\mR_{\nseq,k}\}$ 
{\sc dense} in $\Omega$ whenever $\bigcup_{k\in\N} \bigcup_{R \in \mR_{\nseq,k}} R$ is a dense 
subset of 
$\Omega$.
\end{defn}

We note that versions of these conditions also appear in the context of uniformization of metric carpets, see e.g. \cite{bonkuniform}. 

Uniform sparseness by itself does not ensure a Poincar\'e inequality, and one needs to impose a condition on $\nseq$, the 
sequence of
ratios between
scales. 
In \cite{bigpaper}, it suffices to assume that $\nseq\in \ell^d$ to obtain a $p$-Poincar\'e inequality for all $p>1$. 

In the $p=1$ case, however, we will 
also consider the projections 
of such collections
onto subspaces. 
Below, let $\pi_1, \dots, \pi_d$ be any collection of linearly independent projections of $\R^d$ onto subspaces of codimension one,
that is, the collection of the normal vectors of hyperplanes $\pi_i(\R^d)$ form a linearly independent set in $\R^d$. 
Up to a coordinate change, we will often assume that the $\pi_i$ are coordinate projections.

\begin{defn} \label{def:smallproj}
A uniformly sparse collection of co-uniform domains $\{\mR_{\nseq,k}\}_{k=1}^\infty$ is said to {\sc have small projections} if, with the same constant $L > 0$ as before, 
\begin{itemize}
\item[(6)] 
For $k \in \N$,
if $r \geq  s_{k-1}$ then for each $x \in \Omega$ and $i=1, \dots, d$ it holds that
$$
\Ha_{d-1} \Big(\pi_i\bigg( B(x,r) \cap \bigcup_{R \in \mathcal{R}_{\nseq,k}} R\bigg) \Big) \leq \frac{L r^{d-1}}{n_k^{d-1}}.
$$
\end{itemize}
\end{defn}


With this notion, we now formulate our main result.
{

\begin{theorem}\label{thm:mainthm} 
Fix constants $L,A\geq 1, \delta >0$ as in Definition \ref{def:sparsecoll}. Suppose $\Omega\subset \R^d$ is a 
compact
uniform domain and that 
$
\{\mR_{\nseq,k}\}_{k=1}^\infty
$ 
is 
a uniformly $\nseq$-sparse collection of co-uniform domains in $\Omega$ that has
small projections. The set
$$
S_\nseq \defeq \Omega \setminus \bigcup_{k\in\N} \bigcup_{R \in \mathcal{R}_{\nseq,k}} R.
$$
satisfies a $1$-Poincar\'e inequality (with respect to the restricted measure and metric) if
\begin{align} \label{eq:summability}
\sum_{k=1}^\infty \frac{1}{n_k^{d-1}} < \infty.
\end{align}
\end{theorem}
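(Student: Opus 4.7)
The plan is to reach the $1$-Poincar\'e inequality through the Korte--Lahti route, by first establishing a relative isoperimetric inequality on $S_\nseq$. A full isoperimetric inequality looks hard to set up in one shot, so I would lean on the $(\tau,C)$-isoperimetric inequality machinery the introduction advertises: prove a density-restricted inequality at each dyadic scale, then iterate/self-improve (in the spirit of \cite[Theorems 1.2 and 1.8]{sylvester:poincare} and items (1)--(5) of the introduction) to upgrade the density-restricted conclusion to a full isoperimetric inequality.

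For the scale-wise piece, I would fix a ball $B = B(x,r) \cap S_\nseq$ and the unique $k_*$ with $s_{k_*} \leq r < s_{k_*-1}$. The coarse obstacles $R \in \mR_{\nseq,j}$ with $j \leq k_*$ are comparable to $r$ or larger, few in number, and $\delta s_{k_*-1}$-separated, so they can be absorbed into an ambient isoperimetric inequality on the partially carved set $\Omega \setminus \bigcup_{j \leq k_*}\bigcup_{R \in \mR_{\nseq,j}} R$, using the $A$-uniformity of $\Omega$ and the $A$-co-uniformity of each $R$ in the manner of \cite{bigpaper}. The fine obstacles with $j > k_*$ are controlled by the small-projections hypothesis: slicing in each of the $d$ linearly independent directions $\pi_1, \dots, \pi_d$ and summing Definition \ref{def:smallproj}(6) over $j > k_*$ yields
\[
\Ha_{d-1}\Bigl(\pi_i\Bigl(B \cap \bigcup_{j > k_*}\bigcup_{R \in \mR_{\nseq,j}} R\Bigr)\Bigr) \ \lesssim\ r^{d-1} \sum_{j > k_*} \frac{1}{n_j^{d-1}}.
\]
Under \eqref{eq:summability}, the right-hand side is arbitrarily small once $k_*$ is large, so this ``projection defect'' is strictly dominated by the ambient perimeter, and a standard coordinate-projection lower bound on $(d-1)$-Hausdorff content converts it into a lower bound on the relative perimeter of any measurable $E \subset B$ of density at least $\tau$ in $B$. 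That produces the scale-wise $(\tau,C)$-isoperimetric inequality.

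The main obstacle will be the iteration. Because the $(\tau,C)$-isoperimetric inequality only applies to sets of density at least $\tau$, a set $E$ of smaller density in $B$ must first be localized to a sub-ball where its density reaches $\tau$; one then applies the scale-wise inequality there and telescopes the contributions across scales. Calibrating how the threshold $\tau$, the constant $C(\tau)$, and the working scale $k_*$ must be coupled so that the iteration closes without the constants blowing up is the delicate technical step, and the summability \eqref{eq:summability} is precisely what guarantees the accumulated constants stay finite. Once a genuine relative isoperimetric inequality on $S_\nseq$ is in hand, the $1$-Poincar\'e inequality follows from the Korte--Lahti equivalence.
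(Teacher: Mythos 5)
Your overall route is the paper's: reduce to the Korte--Lahti criterion (Theorem \ref{thm:11PI}) and prove a density-restricted $(\tau,C)$-isoperimetric inequality which then self-improves. But two steps in your sketch have genuine gaps. The first is your treatment of the coarse obstacles. A ball of radius $r$ comparable to $s_{k_*-1}$ may meet $\Omega^c$ or an obstacle of diameter comparable to or larger than $r$, and ``absorbing'' these into an ambient isoperimetric inequality on the partially carved domain does not produce what you need: that inequality bounds the perimeter of $E$ \emph{relative to the carved domain}, and this boundary may lie on, or be shadowed by, the large obstacle, so it need not meet $\partial_{S_\nseq}E$ at all; moreover it cannot be combined with your shadow subtraction, which requires a lower bound on the \emph{projection} of the directional boundary over a cube that is not shadowed by any large obstacle. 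The missing idea is a relocation/density-transfer step: in the paper one uses Corollary \ref{cor:smallerscale} and then Lemma \ref{lem:containedballs}, applied to the uniform domain $\Omega$ or $\Omega\setminus R$ (condition (5) of Definition \ref{def:sparsecoll} guarantees at most one coarse obstacle $R$ is relevant), to move along a uniform curve to a genuinely Euclidean ball $B(x_2,r_2)\subset\Omega$ of comparable radius, avoiding $\Omega^c$ and all coarse obstacles except possibly one of diameter $\leq \beta r_1$, on which $E$ still has definite Lebesgue density. Only then does the cube-based projection bound of Lemma \ref{lem:projection}, minus the small shadows from condition (6) and the $\beta$-small leftover obstacle, yield a boundary portion that provably lies in $\partial_{S_\nseq}E$. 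Without this step your scale-wise inequality does not close.

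Second, your smallness of the projection defect holds only ``once $k_*$ is large'': for balls of large radius the tail $\sum_{j>k_*}n_j^{-(d-1)}$ is merely finite, not small, so the defect need not be dominated by the projected perimeter. The paper resolves this by first proving the inequality for the truncated sponge obtained by discarding the first $N$ generations (so that the total sum is $<\epsilon_1$), noting via condition (5) that balls of radius $<\delta s_{N-1}/2$ in $S_\nseq$ coincide with balls of the truncated sponge, and then upgrading the resulting \emph{local} Poincar\'e inequality to a global one by Lemma \ref{lem:local}; your sketch contains no such reduction. Relatedly, you misplace where \eqref{eq:summability} is used: the upgrade from the $(\tau,C)$-inequality to the full relative isoperimetric inequality is Theorem \ref{thm:largeenough}, a purely metric covering argument (density points, Lemma \ref{lem:goodscale}, the $5B$-covering lemma) whose constants depend only on $D$, $\Lambda$, $C$; there is no cross-scale telescoping and no risk of constants accumulating, so the ``delicate calibration'' you anticipate is not where the work lies. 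Summability enters only through the scale-wise projection estimate just described and, via Lemma \ref{lem:volobstacles}, through Ahlfors regularity of $S_\nseq$.
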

{\color{red}
%
}

\begin{remark}[Previous results]
As a special case
of Theorem \ref{thm:mainthm}, 
we obtain immediately a new proof 
that certain ``non-self similar Sierpi\'nski carpets'' satisfy a $1$-Poincar\'e inequality, 
as first shown in \cite{mackaytysonwildrick}.
{
In particular, the removed obstacles $R$ there are coordinate squares and the uniformly sparse collections of squares have small coordinate projections.%
}
(For this specific construction and other similar ones, see Appendix \ref{app:explicit}.)

In contrast to \cite{mackaytysonwildrick}, whose results apply 
only to 
a construction 
{
involving $d$-dimensional cubes
with $d=2$,%
}
our result applies immediately in all dimensions simultaneously and allows for many variations. 
For instance, 
one could imagine 
sets 
$S_\nseq$
where the sets $R,\Omega$ in Definition \ref{def:sparsecoll} are all circles, or as in Figure \ref{fig:sierpinskitriangle}, where they are all triangles. 
{
One could even go so far as to choose randomly-sided polyhedra with the number of sides, though uniformly bounded, varying with each scale $s_k$!
}
This way, 
our result and proof
give a flexible way to approach such results without 
overly restricting the geometry.

{\color{red}\tt\small
}

We refer to \cite{bigpaper} for a more expansive discussion on the relevance of these results.
\end{remark}

\begin{remark}[Necessity of conditions]
Note that the conditions 
given in Definitions \ref{def:sparsecoll} and \ref{def:smallproj}
are 
close to 
necessary when $d=2$. 
Indeed, in the planar case, versions of conditions (1)-(5) 
are necessary (see \cite[Theorem 4.40]{bigpaper}) while without $(6)$ and the summability condition \eqref{eq:summability} 
one may construct counterexamples \cite[Proposition 4.1]{mackaytysonwildrick}. 

In the case of specific constructions, such as \cite{mackaytysonwildrick} and the one in Figure \ref{fig:sierpinskitriangle}, condition (6) is sharp. 
In cases lacking sufficient symmetry, however, there is a subtlety regarding
the precise placement of obstacles.  In a similar spirit as say, the quasiconformal Jacobian problem, it
appears very difficult 
to formulate a completely sharp result; see e.g. \cite{semmesbilip} for further discussion on similar characterization problems.

In any event, some minimal assumption, say the
the weaker condition 
that
$\nseq\in \ell^d$, 
is needed for the set to have positive Lebesgue measure 
and guarantee the validity of some Poincar\'{e} inequality. 
%
%
\end{remark}

In order to obtain Theorem \ref{thm:11PI}, we need a flexible way to prove such inequalities, and a condition which 
handles such inequalities at fixed scales.
This involves a new notion of isoperimetry which applies to all metric measure spaces, not just Euclidean ones. Indeed, the only place where 
the Euclidean structure 
is used
is in using projections and a projected isoperimetric inequality, see Lemma \ref{lem:projection}. It is conceivable that analogous structures exist in other settings. For example, the Heisenberg group has a natural collection of vertical projections \cite[Definition 2.2]{orponenetal}, and one may push our main result to such settings. We leave this for future exploration.

\begin{figure}%
    \centering
    \subfloat[Step one: Removal of the central white triangle (relative to height) with $n_1=5$.]{{\includegraphics[width=7cm]{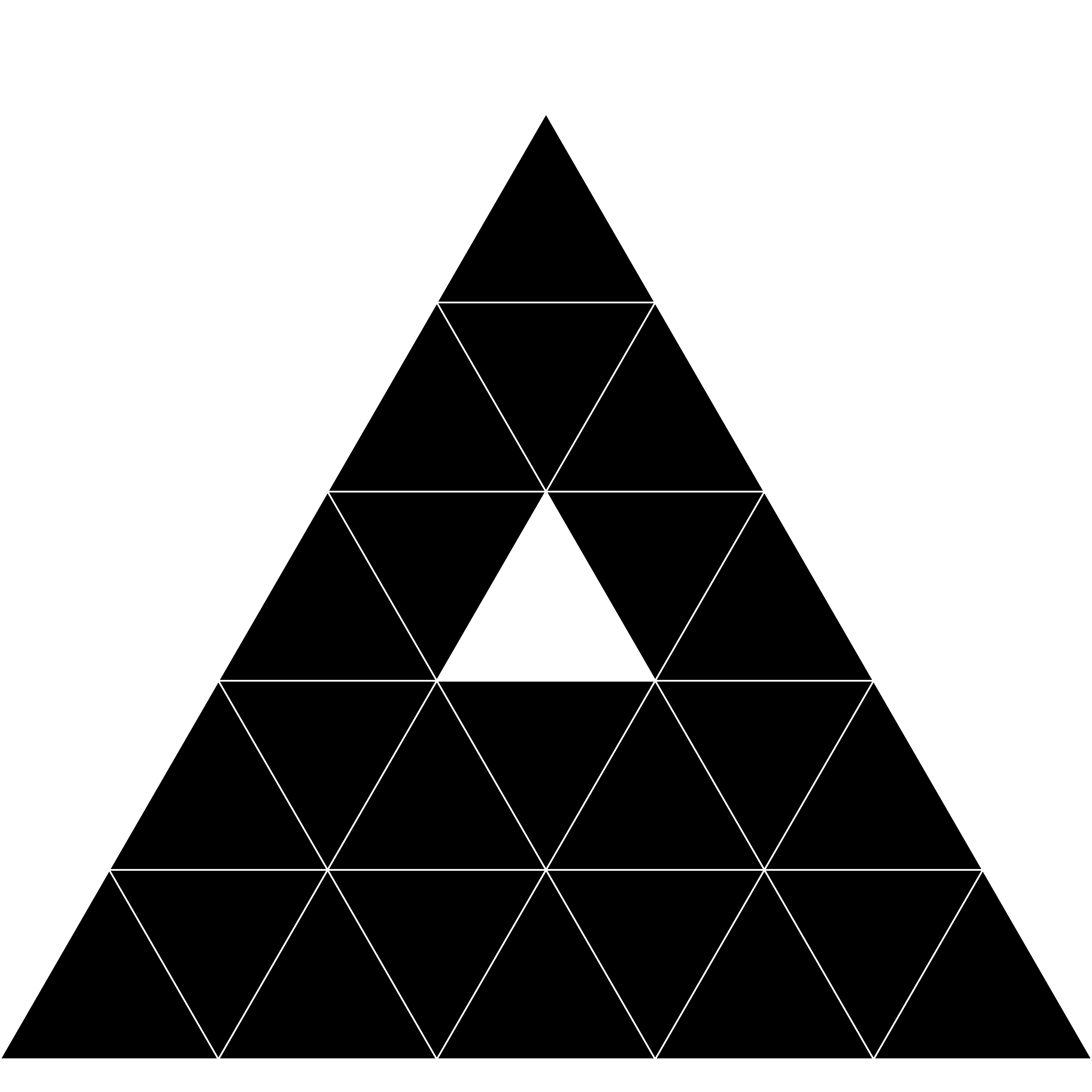} }}%
    \qquad
    \subfloat[Three steps: Domain is in black, and removed triangles in white. We used $n_1=5,n_2=7,n_3=11.$]{{\includegraphics[width=7cm]{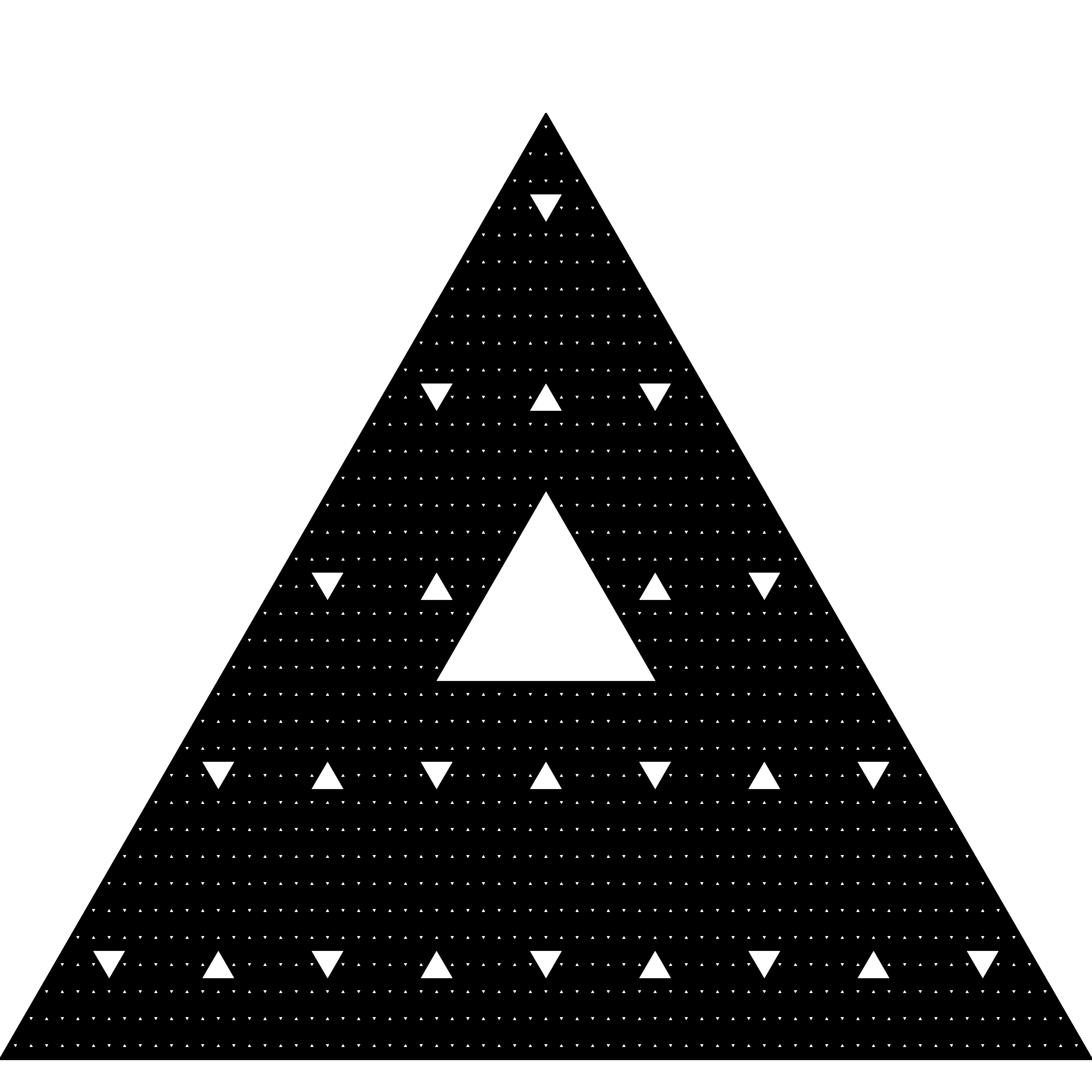} }}%
    \caption{Construction of a non-self-similar triangular version of a Sierpi\'nski carpet.}%
    \label{fig:sierpinskitriangle}%
\end{figure}

\subsection{Iterating Isoperimetry}

The case of $p=1$ is a borderline case for the Poincar\'e inequality and an inherently geometric one.
Consider, for example, the well-known correspondence between
the Sobolev embedding theorem and 
the isoperimetric inequality. 

Related to this, we will employ the characterization of Lahti and Korte \cite{lahtikorteisoperim} which asserts that the validity of a 
$1$-Poincar\'e inequality is equivalent to a so-called ``relative isoperimetric inequality'', see Theorem \ref{thm:11PI}. 
Since,
this fact and 
many of the following results hold 
true in general metric measure spaces,
the forthcoming discussion will also be formulated in the context of metric measure spaces.

The isoperimetric inequality is easier to establish for ``larger'' sets in the sense of density, as defined below.

\begin{definition}
Let $(X,\mu)$ be a measure space and let $E,F$ be measurable subsets of $X$ with $\mu(F) > 0$. 
We define
the quantity
$$
\Theta_\mu(E,F) \defeq 
\frac{\min\{\mu(E \cap F), \mu(F \setminus E) \}}{\mu(F)}.
$$
\end{definition}

This quantity is symmetric for $E$ and its complement $E^c$. We are interested in the sets $E$ with $\Theta_\mu(E,F) \geq \tau$ for a given $\tau>0$ -- i.e. the ones that are 
neither empty nor full, quantitatively.
 In our proof of Theorem \ref{thm:mainthm}, for sets with larger 
{
relative density $\Theta_\mu(E,F)$
}
 we can throw away and control a junk-set coming from conditions (5) and (6) in Definitions \ref{def:sparsecoll} and \ref{def:smallproj}.  Interestingly enough, to prove isoperimetry it suffices to consider such sets. More precisely, we prove the following fact, which applies to all metric measure spaces and may be of independent interest.
 Here, 
as in Definition \ref{defn:relisoconsts},
a $(\tau,C)$-isoperimetric inequality is one that only holds for sets $E \subset B=B(x,r)$ with $\Theta_\mu(E,B) \geq \tau$, 
where $C$ is a 
multiplicative constant.

\begin{theorem}\label{thm:largeenough} Let $(X,d,\mu)$ be $D$-doubling with $D \geq 2$, and let 
$
\tau \in (0,\frac{1}{D^3}]
$.
If $X$ satisfies a $(\tau, C)$-isoperimetric inequality with inflation factor $\Lambda$, then $X$ satisfies the relative isoperimetric inequality with constants
$C_S = C_S(D,\Lambda)=D^{7+\log_2(\Lambda)}C$
and
$\Lambda_S=\Lambda_S(\Lambda)= 2\Lambda$. 
\end{theorem}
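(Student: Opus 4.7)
The plan is a Calder\'on--Zygmund stopping-time argument: cover $E \cap B$ by Vitali balls whose $E$-density falls in the narrow window $[\tau, D\tau]$ so as to activate the $(\tau, C)$-isoperimetric hypothesis there, then sum the local estimates using bounded overlap. After replacing $E$ with its complement if needed (by symmetry of $\Theta_\mu$) and disposing of the case $\Theta_\mu(E,B) \ge \tau$ by the hypothesis applied directly, we may assume $\mu(E \cap B) < \tau\,\mu(B)$.

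For $\mu$-a.e.\ $x \in E \cap B$, the Lebesgue differentiation theorem in doubling spaces gives density $1$ at $x$, so the stopping radius
$$
r_x \defeq \sup\bigl\{\, r > 0 : \mu\bigl(E\cap B(x,r)\bigr) \ge D\tau\,\mu(B(x,r))\,\bigr\}
$$
is strictly positive. After a standard cut-off reducing to $E \subset B$, any scale $r \ge 2r_0$ forces $B \subset B(x,r)$ and hence $\mu(E \cap B(x,r)) = \mu(E) < \tau\mu(B) \le \tau\mu(B(x,r))$, so $r_x < 2 r_0$. Setting $B_x = B(x, r_x)$ and $\tilde B_x = 2 B_x$, by definition $\mu(E \cap B_x) \ge D\tau\,\mu(B_x)$ while $\mu(E \cap \tilde B_x) \le D\tau\,\mu(\tilde B_x)$; coupled with $\mu(\tilde B_x) \le D\,\mu(B_x)$ (doubling), this pins the density of $E$ in $\tilde B_x$ to the interval $[\tau, D\tau]$. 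The hypothesis $\tau \le 1/D^3$ is exactly what yields $D\tau \le 1-\tau$, so $\Theta_\mu(E,\tilde B_x) \ge \tau$ and the $(\tau, C)$-isoperimetric inequality applies to $\tilde B_x$.

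Next I would apply the $5B$-Vitali covering lemma to $\{B_x\}$ to extract a disjoint subfamily $\{B_i\}$ whose $5$-dilates cover $\mu$-a.e.\ of $E$. On each $\tilde B_i = 2 B_i$ the hypothesis gives
$$
\mu(E \cap \tilde B_i) \le 2C\,r_i\, P(E, 2\Lambda B_i),
$$
where $P(E, \cdot)$ denotes the perimeter functional appearing on the right of the $(\tau, C)$-inequality, and we use $D\tau < 1/2$ to identify the minimum side. The supremum-definition of $r_x$ combined with two applications of doubling yields $\mu(E \cap 5 B_i) \le D\tau\,\mu(5 B_i) \le D^3\,\mu(E \cap \tilde B_i)$. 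Since the $B_i$ are disjoint, the dilates $\{2\Lambda B_i\}$ have overlap multiplicity $\lesssim D^{\log_2 \Lambda + O(1)}$ in a doubling space. Summing and using $r_i \le 2r_0$ produces
$$
\mu(E \cap B) \le \sum_i \mu(E \cap 5 B_i) \le D^{7+\log_2\Lambda}\,C\,r_0\,P(E, 2\Lambda B),
$$
which is the relative isoperimetric inequality with $C_S$ and $\Lambda_S = 2\Lambda$.

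The principal obstacle is the sharp two-sided density estimate in the stopping step: the threshold $\tau \le 1/D^3$ is precisely tuned so that the density of $E$ in the doubled ball $\tilde B_x$ lies in $[\tau, 1-\tau]$, triggering the $(\tau, C)$-hypothesis. Any coarser threshold would leave either the upper or lower density estimate untriggered and break the iteration. A secondary challenge is the bookkeeping of doubling exponents — from Vitali, from the $5B_i \leftrightarrow \tilde B_i$ comparison, and from the bounded overlap of $\{2\Lambda B_i\}$ — to land on the precise final exponent $7+\log_2\Lambda$.
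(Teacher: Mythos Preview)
Your overall strategy---stopping-time to find balls of controlled density, Vitali covering, local application of the hypothesis, then summation---is exactly the paper's. Two execution steps, however, fail as written.

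First, the ``standard cut-off reducing to $E\subset B$'' is not harmless: replacing $E$ by $E\cap B$ adds $\partial B$ to the relative boundary, so when you invoke the $(\tau,C)$-hypothesis on $\tilde B_i$ you are bounding $\mathcal{H}_h(\partial(E\cap B)\cap 2\Lambda B_i)$, which may pick up large contributions from $\partial B$ that you cannot absorb. The paper never cuts off. Instead it first disposes of the case $\mu(E\cap B(x,2r))/\mu(B(x,2r))\ge 1/D^3$ directly, and in the remaining case observes that for any density point $z\in B(x,r)$ one has $B(z,r)\subset B(x,2r)$, whence $\mu(E\cap B(z,r))/\mu(B(z,r))\le D^2\cdot\mu(E\cap B(x,2r))/\mu(B(x,2r))<1/D$; this already forces the stopping radius $r_z\le r$ without touching $E$.

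Second, and this is the fatal step, the claim that disjointness of $\{B_i\}$ yields bounded overlap of $\{2\Lambda B_i\}$ is \emph{false} for families with varying radii, even in $\mathbb{R}$: the intervals $B_i=(3^{-i}\cdot\tfrac34,\,3^{-i}\cdot\tfrac54)$ are pairwise disjoint, yet every $10B_i$ contains the origin. The paper avoids any overlap estimate by applying the $5B$-covering lemma to the \emph{inflated} family $\{B(z,\Lambda r_z)\}$ rather than to $\{B(z,r_z)\}$. The extracted subfamily $\{B(z_i,\Lambda s_i)\}$ is then disjoint at the inflated scale, so $\sum_i\mathcal{H}_h(\partial E\cap B(z_i,\Lambda s_i))\le \mathcal{H}_h(\partial E\cap B(x,2\Lambda r))$ holds by disjointness alone. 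With these two fixes your argument becomes the paper's proof.
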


The proof of this theorem involves ``iterating'' an estimate at appropriate scales. This method of relative isoperimetry via iteration is new.  To the authors' knowledge, this is the first instance where it is used to verify a previously-conjectured Poincar\'e inequality. This method has the advantage that it allows to throw away small sets (such as those arising from condition (6) in Definition \ref{def:smallproj}). It further allows to focus on a single scale at a time.

}


\subsection{Outline}

The remainder of the paper is organised as follows. In Section  \S\ref{sec:preliminaries} we discuss preliminaries on measure and uniformity and state crucial lemmas. Section \S\ref{sec:isoperim} is devoted to facts about isoperimetry and stating the isoperimetric inequality. In that section we also prove Theorem \ref{thm:largeenough} and give the projected isoperimetric inequality in $\R^d$ in Lemma \ref{lem:projection}.

The proof of the main result (Theorem \ref{thm:mainthm}) is then left to Section \S\ref{sec:proof}. This rests on establishing the $(\tau,C)$-isoperimetric inequality for subsets $E \subset B(x,r) \cap  S_{\nseq}$. First, we reduce the case of the sum in Equation \eqref{def:smallproj} being small. This is done by localizing the argument.  Following this, one replaces $B(x,r)$  by a better ball, which does not intersect too large obstacles. To this ball one applied first Lemma \ref{lem:projection} to obtain some Euclidean boundary. The restriction on projections, and the large enough density of $E$ and its complement, means that some of this boundary must lie in the original carpet. A precise quantitative bound yields the result.
{\color{red}
}

In Appendix \ref{app:explicit} we give the explicit example of a non-self similar Sierpi\'nski sponge similar to \cite{mackaytysonwildrick}, and show how the higher dimensional generalization of their result follows from Theorem \ref{thm:mainthm}.

\section{Preliminaries}\label{sec:preliminaries}

\noindent \textbf{Notational convention:} There are many constants to keep track of in our proof. In doing so, we shall use the notation $C=C(A,B,\dots)$ to indicate when a constant $C$ in a statement depends on other constants $A,B$ in the same statement. 

\vspace{.1cm}

\subsection{Measure theoretic preliminaries} 
%

For the interest of generality, many of the statements in this preliminary section will be formulated for general metric measure spaces, while our main result (Theorem \ref{thm:mainthm}) is formulated only for $X=S_\nseq \subset \R^d$, where we employ the restricted measure $\mu=\lambda|_{S_\nseq}$, 
with $\lambda$ the usual Lebesgue measure on $\R^d$. 

To this end, open balls in a metric space are denoted $B=B(x,r)$, and their dilations by $CB=B(x,Cr)$, despite the ambiguity that balls may not be uniquely defined by their radii. Where necessary, 
we will include subscripts to indicate the ambient space in which the balls are located. Thus if $X \subset \R^d$, 
then with center $x \in X$ and radius $r > 0$, the ball in $\R^d$ is $B(x,r) = B_{\R^d}(x,r)$ while the ball in $X$ is $B_X(x,r) = B(x,r) \cap X$.
We also apply this subscript notation for {\em relative boundaries}, i.e.\ $\partial_XE$ refers to the boundary of $E$, when $E$ is treated as a subset of $X$.

Throughout this paper we will 
consider only measures $\mu$ whose support is all of the underlying metric space $X$, i.e.
${\rm supp}(\mu)=X$
and that 
$\mu(B(x,r)) \in (0,\infty)$
for all balls in the metric space. 

The volume of any unit ball $B(x,1)$ in $\R^d$ is $\omega_d = \frac{\pi^\frac{d}{2}}{\Gamma(\frac{d}{2}+1)}$, where $\Gamma(x)$ is the 
standard Gamma function.

\begin{defn} \label{def:doubling} \label{def:ahlforsreg}
A metric measure space $(X,d,\mu)$ with a Radon measure $\mu$ is said to be \emph{\sc $D$-(measure) doubling}
if for
all $r \in (0,{\rm diam}(X))$ and any $x \in X$ we have
\begin{equation}\label{eq:doubling}
0 < \mu(B(x,2r)) \leq D \mu(B(x,r))
\end{equation}
and 
$(X,d,\mu)$ is said to be {\sc Ahlfors $Q$-regular} (with constant $C_{AR}>0$) if for
all $r \in (0,{\rm diam}(X))$ and any $x \in X$ we have
\begin{equation}\label{eq:arreg}
\frac{1}{C_{AR}} r^Q \leq \mu(B(x,r)) \leq C_{AR}r^Q.
\end{equation}
\end{defn}

\noindent
It is easy to see that the doubling condition \eqref{eq:doubling} forces $D \geq 1$ and that every
Ahlfors $Q$-regular space is 
$2^QC_{AR}^2$-doubling. 

Further, every $D$-doubling space is metrically doubling, 
in the following sense:\
a metric space $X$ is {\sc $N$-metrically doubling} if there is a constant $N \in \N$ so that for every ball $B(x,r) \subset X$, there are 
centers $x_1, x_2, \dots, x_N$ 
(possibly overlapping)
so that 
$$
B(x,r) \subset \bigcup_{i=1}^N B(x_i,r/2).
$$
The metric doubling constant 
of a $D$-doubling space 
is
$N=N(D)=D^4$. See \cite{heinonen2000} for further details about metrically and measure doubling spaces.

Occasionally, we will assume that $X$ is geodesic, that is between any pair of points $x,y \in X$ there is a rectifiable curve $\gamma: I \to X$ that connects them and with length 
$d(x,y)$.
This is automatically true for our main space of interest, $X=\R^d$. For metric measure spaces in general, it ensures that
$\mu(\partial B(x,r)) = 0,$
for any ball $B(x,r) \subset X$,
in which case the map $(x,r) \to \mu(B(x,r))$ then becomes continuous.
See, for example, \cite{buckley}. 

For any subset $E \subset X$ and $x \in X$, we denote the distance to the set $E$ by 
$$
d(x,E)=\inf_{e \in E} d(x,y).
$$
(For empty sets $E$, we interpret the infimum and hence the distance as infinite.)

If $E \subset X$ is a $\mu$-measurable set, then $x \in X$ is called a point of density of $E$ if 
\begin{equation}\label{eq:pointofdens}
\lim_{r \to 0} \frac{\mu(B(x,r) \cap E)}{\mu(B(x,r))} = 1.
\end{equation}
The following lemma allows for choosing scales with density within a desired range, 
once an upper bound for the density is met.

\begin{lemma} \label{lem:goodscale}
Let $X=(X,d,\mu)$ be a $D$-doubling metric measure space and let $E$ be a Borel subset of $X$.  If $x$ is a point of density of $E$ and if $R>0$ is such that
$$
\frac{\mu(E \cap B(x,R))}{\mu(B(x,R))} \leq b
$$
for some $b \in (0,1)$, then there exists
$r' \in (0,R)$ such that 
$$
\frac{b}{D} \leq \frac{\mu(E \cap B(x,r'))}{\mu(B(x,r'))} \leq b.
$$
\end{lemma}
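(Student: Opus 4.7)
The plan is to run a dyadic-halving argument on the auxiliary density function
\[
f(r) \defeq \frac{\mu(E \cap B(x,r))}{\mu(B(x,r))}.
\]
By hypothesis $f(R) \leq b$, while the density-point assumption \eqref{eq:pointofdens} forces $f(r) \to 1$ as $r \to 0^{+}$; in addition, the $D$-doubling condition \eqref{eq:doubling} prevents $f$ from changing by more than a factor of $D$ when the radius is halved. These three ingredients together force $f$ to land inside the window $[b/D, b]$ at some scale in $(0, R)$.

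Concretely, I would consider the dyadic sequence $r_k \defeq R / 2^k$ for $k \geq 0$. Since $f(r_0) \leq b < 1$ and $f(r_k) \to 1$, there is a smallest index $k^{\ast} \geq 1$ with $f(r_{k^{\ast}}) > b$; the minimality of $k^{\ast}$ immediately gives $f(r_{k^{\ast}-1}) \leq b$, which is the upper bound in the conclusion with the candidate $r' \defeq r_{k^{\ast}-1}$. For the matching lower bound, I would compare these two consecutive scales directly: the nesting $B(x, r_{k^{\ast}}) \subset B(x, r_{k^{\ast}-1})$ gives $\mu(E \cap B(x, r_{k^{\ast}-1})) \geq \mu(E \cap B(x, r_{k^{\ast}}))$, while the identity $r_{k^{\ast}-1} = 2 r_{k^{\ast}}$ together with \eqref{eq:doubling} gives $\mu(B(x, r_{k^{\ast}-1})) \leq D \mu(B(x, r_{k^{\ast}}))$. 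Dividing these two inequalities yields
\[
f(r_{k^{\ast}-1}) \;\geq\; \frac{f(r_{k^{\ast}})}{D} \;>\; \frac{b}{D},
\]
so $r' = r_{k^{\ast}-1}$ meets both required bounds.

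The main technicality I expect is the borderline case $k^{\ast} = 1$, in which the halving procedure outputs $r' = R$ rather than a value strictly less than $R$. I would address this by a short left-continuity argument: since the balls in the paper are open, the maps $r \mapsto \mu(B(x,r))$ and $r \mapsto \mu(E \cap B(x,r))$ are both left-continuous (open balls of radius $r' < R$ exhaust $B(x,R)$), so $f$ is left-continuous at $R$. One can therefore push $R$ down to some $R' < R$ at which the hypothesis $f(R') \leq b$ still holds (allowing a harmless infinitesimal relaxation of $b$ if $f(R) = b$ exactly) and then rerun the dyadic selection starting from $R'$, producing $r' = R'/2^{k^{\ast} - 1} < R$ with the same conclusion.
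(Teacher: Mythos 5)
Your proof is correct and essentially the same as the paper's: both run the dyadic-halving argument on the density function $h(t)=\mu(E\cap B(x,t))/\mu(B(x,t))$, use the density-point hypothesis to find the extremal dyadic scale at which the density is still at most $b$, and use the doubling comparison $h(t/2)\leq D\,h(t)$ between consecutive scales to obtain the lower bound $b/D$. The only divergence is your treatment of the borderline case $r'=R$, which the paper silently ignores (its proof may also output $r'=R$, and it later invokes the lemma only with $r'\leq R$); note that the ``infinitesimal relaxation of $b$'' cannot fully rescue the strict inequality $r'<R$ when $h(R)=b$ exactly (one can arrange $h(r)>b$ for all $r<R$ with $h(R)=b$), but this is an imprecision in the paper's stated range $(0,R)$ rather than a flaw in your argument.
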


\begin{remark}\label{rmk:continuity}
Above,
if $X$ is such that the map $(x,r) \to \mu(B(x,r))$ is continuous, then the upper bound is attained for some $r'$, i.e.\
$$
\frac{\mu(E \cap B(x,r'))}{\mu(B(x,r'))}=b.
$$
This occurs, for instance, when $X$ is a geodesic metric space,  or when $X$ is a subset of a geodesic metric space and equipped with the restricted measure and distance.
\end{remark}

\begin{proof}[Proof of Lemma \ref{lem:goodscale}]
Put $h(t)\defeq \frac{\mu(E \cap B(x,t))}{\mu(B(x,t))}$, 
so
$\lim_{t \to 0} h(t)=1$, 
and put $R_k = 2^{-k}R$. Since $x$ is a point of density of $E$, there is some $N_0 \in \N$ such that for all $k \geq N_0$ we have $h(R_{k+1})>b$. Let $K$ be the largest index such that $h(R_K)\leq b$. 

From doubling we have 
$\frac{h(t/2)}{h(t)} \leq D$
for all $t > 0$.  
So if 
$h(R_K) \leq D^{-1}b$, then 
$$
h(R_{K+1}) = 
h\Big( \frac{R_K}{2} \Big)\leq 
D \cdot h(R_K) \leq b
$$
which is a contradiction to $K$ being the largest index with this property. 
The desired estimate for $r'=R_K$
follows.
\end{proof}


\subsection{Preliminaries on Uniformity} \label{sec:uniformity}

Here, a curve is a continuous map $\gamma\colon I \to X$ and $\Omega \subset X$ will denote a closed set. 

\begin{definition}\label{def:uniform} 
Given $x,y \in \Omega$ and $A \geq 1$, we say that $\gamma\colon[0,1] \to X$ is an {\sc $A$-uniform curve} between $x$ and $y$ in $\Omega$ if $\gamma(0)=x, \gamma(1)=x$, $\diam(\gamma) \leq A d(x,y)$, and 

\begin{equation}\label{eq:uniformity}
d(\gamma(t),\Omega^c) \geq \frac{1}{A} \min\{\diam(\gamma|_{[0,t]}),\diam(\gamma|_{[t,1]})\}.
\end{equation}
for all $t \in [0,1]$.

A subset $\Omega \subset X$ is called {\sc $A$-uniform} if for every $x,y \in \Omega$ there is an $A$-uniform curve between $x$ and $y$. If $\Omega^c = \emptyset$, we apply the convention $d(x,\emptyset)=\infty$, and the condition is vacuously satisfied for $\Omega = X$.

An open subset $R$ of $X$ is called ($A-$)\emph{\sc co-uniform} if $\partial R$ is connected and $X \setminus R$ is $A-$uniform.
\end{definition}

The notion of uniform domains $\Omega$ has been 
introduced by Martio and Sarvas, \cite{martiosarvas} where it was initially required 
that such sets $\Omega$ be open. 
We remark, that if a 
closed subset $\Omega$ is uniform then its interior ${\rm int}(\Omega)$ is uniform in 
the classical sense. One can also show that $\partial \Omega$ is a porous subset, and thus 
has zero Lebesgue measure. 
This allows us to apply many of the calculations in classical literature, see \cite[Section 4.2]{bigpaper} for a more detailed discussion.
We also refer to \cite{vaisala1988,  bjornuniform} for further, fundamental results about such domains.
Co-uniformity, which appears in \cite{bigpaper}, was introduced as a further, convenient context for multiply-connected domains.


Clearly every set is a uniform domain in itself, i.e.\
if $\Omega=X$ 
then
the conditions 
in Definition \ref{def:uniform} are 
automatically satisfied.
Of the next two lemmas, the first relates uniformity of domains to the previous notion of doubling (Definition \ref{def:doubling}) and the second is a technical estimate to be used later.
See \cite[Lemma 4.24]{bigpaper} 
and \cite[Lemma A.3]{bigpaper}
for detailed arguments, 
respectively. {
The first of these also follows easily from 
\cite[Lemma 4.2]{bjornuniform}.}


\begin{lemma}\label{lem:arreg} Suppose $\Omega \subset X$ is $A$-uniform and $A$ and that $X$ is {
$Q$-Ahlfors regular with constant $C$}, then $\Omega$ is {
$Q$-Ahlfors regular with constant $C'= C'(C,Q)$} 
when equipped with the restricted measure.
\end{lemma}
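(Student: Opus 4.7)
The plan is to verify the two sides of the $Q$-Ahlfors regularity condition for $\Omega$ equipped with the restricted measure $\mu|_\Omega$. The upper bound $\mu|_\Omega(B_\Omega(x,r)) \le \mu(B_X(x,r)) \le C r^Q$ is immediate from $B_\Omega(x,r) \subset B_X(x,r)$ and the regularity of $X$, so all the work is in the lower bound.

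For the lower bound I would produce an interior ``corkscrew'' point $y \in \Omega \cap B_X(x,r)$ with $d(y, \Omega^c) \ge c\, r$ for some $c = c(A) > 0$. Since $B_X(y, cr)$ then sits inside both $\Omega$ and $B_X(x,r)$, Ahlfors regularity of $X$ applied to this ball gives $\mu|_\Omega(B_\Omega(x,r)) \ge C^{-1}(cr)^Q$, finishing the proof with $C' = C'(C, Q, A)$.

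To build $y$, I fix the scale $s = r/(2A)$. Using that $\diam(\Omega) > r > 2s$, the triangle inequality supplies some $z' \in \Omega$ with $d(x,z') > s$; applying uniformity between $x$ and $z'$ and using continuity of $t \mapsto d(x, \gamma(t))$ along the resulting uniform curve, I locate by the intermediate value theorem a point $z \in \Omega$ with $d(x,z) = s$. Uniformity applied to this pair produces an $A$-uniform curve $\gamma\colon [0,1] \to \Omega$ of diameter at most $As = r/2$. Continuity and monotonicity then yield a ``balancing parameter'' $t^* \in [0,1]$ at which $\diam(\gamma|_{[0,t^*]}) = \diam(\gamma|_{[t^*,1]})$, and the elementary bound $\diam(\gamma|_{[0,t^*]}) + \diam(\gamma|_{[t^*,1]}) \ge d(x,\gamma(t^*)) + d(\gamma(t^*),z) \ge d(x,z) = s$ forces each half to have diameter at least $s/2$. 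Setting $y := \gamma(t^*)$, the uniformity condition then gives $d(y, \Omega^c) \ge s/(2A) = r/(4A^2)$, while $d(x,y) \le \diam(\gamma) \le r/2$ keeps $y$ safely inside $B_X(x,r)$; one checks $r/2 + r/(4A^2) \le r$ since $A \ge 1$.

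The main obstacle is the corkscrew construction itself: once a point $y$ with a definite separation from $\Omega^c$ is produced inside $B_X(x,r)$, the lower bound drops out by one application of the regularity of $X$. The balance-parameter trick is standard in uniform-domain theory, but the bookkeeping of the constants $A, s, r$ must be handled carefully so that the corkscrew ball $B_X(y, r/(4A^2))$ fits simultaneously inside $\Omega$ and inside $B_X(x,r)$, and so that the initial choice of $z'$ is valid for every $r \in (0, \diam(\Omega))$ and every $x \in \Omega$.
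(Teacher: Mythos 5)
Your argument is correct and follows essentially the route the paper relies on via its citations (\cite[Lemma 4.24]{bigpaper}, \cite[Lemma 4.2]{bjornuniform}): the upper bound is trivial from $B_\Omega(x,r)\subset B_X(x,r)$, and the lower bound comes from a corkscrew point, which you construct directly from an $A$-uniform curve via the balancing-parameter trick, yielding $B_X(y,r/(4A^2))\subset \Omega\cap B_X(x,r)$ and hence the bound $C^{-1}(4A^2)^{-Q}r^Q$. Note only that your constant necessarily depends on $A$ as well, i.e.\ $C'=C'(C,Q,A)$, which is the intended reading of the statement.
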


\begin{lemma} \label{lem:basicuniflemma} Let $\Omega \subset X$ be a closed subset and let $x,y \in \Omega$.  If $\gamma \co [0,1] \to \Omega$ is an $A$-uniform curve between $x$ and $y$ in $\Omega$, then for every $t \in [0,1]$ it holds that
$$
 d(\gamma(t), \Omega^c) \geq \frac{1}{4A}\min\{d(x, \Omega^c) + \diam(\gamma|_{[0,t]}),d(y, \Omega^c) + \diam(\gamma|_{[t,1]})\}.
$$
\end{lemma}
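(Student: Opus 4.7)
By the symmetry of $A$-uniformity under the reparametrization $s \mapsto 1-s$, I may assume without loss of generality that $\delta_x + a \leq \delta_y + b$, where $\delta_x := d(x,\Omega^c)$, $\delta_y := d(y,\Omega^c)$, $a := \diam(\gamma|_{[0,t]})$, and $b := \diam(\gamma|_{[t,1]})$; the target then reduces to $d(\gamma(t),\Omega^c) \geq (\delta_x + a)/(4A)$. (If $\Omega^c = \emptyset$ the inequality is vacuous.)

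The plan is to assemble two types of lower bounds on $d(\gamma(t),\Omega^c)$ and combine them by convex combinations. The first is the uniformity condition \eqref{eq:uniformity} itself: $d(\gamma(t),\Omega^c) \geq \min(a,b)/A$, call this (U). The second comes from the triangle inequality: for any $z \in \Omega^c$, $d(x,z) \leq d(x,\gamma(t)) + d(\gamma(t),z) \leq a + d(\gamma(t),z)$, so taking the infimum over $z \in \Omega^c$ yields $d(\gamma(t),\Omega^c) \geq \delta_x - a$, and analogously $d(\gamma(t),\Omega^c) \geq \delta_y - b$. These triangle bounds are only informative when positive.

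Next I would split on $\min(a,b)$. Assume first $a \leq b$. If $a \geq \delta_x$, then (U) already delivers $(\delta_x + a)/(4A) \leq a/(2A) \leq a/A$. If instead $a < \delta_x$, both $a/A$ and $\delta_x - a$ are positive lower bounds; since the maximum of two numbers dominates any convex combination of them, I can average with weights $\lambda = A/(A+1)$ and $1-\lambda = 1/(A+1)$ --- a choice designed precisely so the $a$-contributions cancel --- to obtain $d(\gamma(t),\Omega^c) \geq \delta_x/(A+1)$. Using $a < \delta_x$ and $A \geq 1$ one then verifies $\delta_x/(A+1) \geq (\delta_x + a)/(4A)$. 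The case $b < a$ is the mirror image, yielding $d(\gamma(t),\Omega^c) \geq \delta_y/(A+1)$ in the nontrivial subcase $b < \delta_y$, after which the WLOG assumption $\delta_x + a \leq \delta_y + b < 2\delta_y$ transfers the bound to the desired form.

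The main obstacle is purely organizational rather than conceptual: one must combine uniformity with the triangle inequality from the \emph{correct} endpoint --- the one whose distance $\delta$ appears in the target --- and pick the convex-combination weight so as to eliminate the diameter term. Once set up this way, no further ingredients are needed, and the slack factor $4$ in $1/(4A)$ comfortably absorbs both the $\min$-splitting and the $A \geq 1$ comparisons.
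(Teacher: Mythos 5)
Your proof is correct: the two lower bounds you use, the uniformity estimate $d(\gamma(t),\Omega^c)\geq \frac{1}{A}\min\{a,b\}$ and the triangle-inequality estimates $d(\gamma(t),\Omega^c)\geq \delta_x-a$, $d(\gamma(t),\Omega^c)\geq \delta_y-b$, combined by the case split on $\min\{a,b\}$ and the convex-combination weights $\frac{A}{A+1},\frac{1}{A+1}$, do yield the stated inequality with constant $\frac{1}{4A}$ (using $A\geq 1$), and the symmetry reduction and the empty-complement convention are handled properly. Note that the paper itself does not prove this lemma but defers to \cite[Lemma A.3]{bigpaper}; your argument is the natural elementary one for such a statement and is a complete, self-contained verification.
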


We will also need the following 
result.  It affirms the intuitive idea that nontrivial overlaps of uniform domains are also uniform.
{\color{blue}
}
The proof is somewhat technical, however, and can be found in
\cite[Theorem 4.22]{bigpaper}.

\begin{theorem}\label{thm:cutout}
Fix structural constants $A_1,A_2,C,D \geq 1$.
Let $X$ be a $C$-quasiconvex, $D$-metric doubling metric space, let $\Omega$ be an $A_1$-uniform subset of $X$, and let $S$ be a bounded, 
$A_2$-co-uniform subset of $X$.  If
$$
\overline{S} \subset {\rm int}(\Omega)
$$
then $\Omega \setminus S$ is 
$A'$-uniform 
in $X$,
with dependence $A' = A'(A_1,A_2,C,D, \frac{d(S,\partial \Omega)}{\diam(S)})$.
\end{theorem}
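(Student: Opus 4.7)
The plan is to construct, for any pair $x,y \in \Omega \setminus S$, an $A'$-uniform curve between them by starting with an $A_1$-uniform curve $\gamma$ in $\Omega$ and modifying it to detour around $S$ using the co-uniformity of $S$. If $\gamma([0,1]) \cap \overline{S} = \emptyset$, then $\gamma$ already witnesses $A_1$-uniformity in $\Omega \setminus S$, so assume otherwise and set $t_1 = \inf\{t \co \gamma(t) \in \overline{S}\}$ and $t_2 = \sup\{t \co \gamma(t) \in \overline{S}\}$. Then $\gamma(t_1),\gamma(t_2)\in\partial S$, and the outer segments $\gamma|_{[0,t_1]}$ and $\gamma|_{[t_2,1]}$ lie in $\Omega\setminus S$. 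Apply the $A_2$-co-uniformity of $S$ to obtain an $A_2$-uniform curve $\tilde\gamma$ in $X\setminus S$ from $\gamma(t_1)$ to $\gamma(t_2)$, and form the concatenation $\gamma'=\gamma|_{[0,t_1]}\cdot \tilde\gamma\cdot \gamma|_{[t_2,1]}$, the candidate curve in $\Omega\setminus S$.

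The main obstacle is to ensure that $\tilde\gamma$ actually lies in $\Omega$. Since $\diam(\tilde\gamma)\leq A_2\,d(\gamma(t_1),\gamma(t_2))\leq A_2\diam(S)$, every point of $\tilde\gamma$ lies within $A_2\diam(S)$ of $\partial S$, while $d(\partial S,\partial\Omega)\geq \beta\diam(S)$ where $\beta \defeq d(S,\partial\Omega)/\diam(S)$. When $A_2 \leq \beta$, the detour automatically stays inside $\Omega$; otherwise, at each maximal subinterval $(u_1,u_2)$ along which $\tilde\gamma$ exits $\Omega$ (with $\tilde\gamma(u_j)\in\partial\Omega$), I replace $\tilde\gamma|_{[u_1,u_2]}$ by an $A_1$-uniform curve in $\Omega$ between the same endpoints, and then recursively re-apply the $S$-avoidance construction to this replacement. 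The metric doubling and quasiconvexity of $X$, together with the positivity of $\beta$, bound the number of iterations and control the accumulating constants; this is the source of the dependence of $A'$ on $\beta$.

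Finally, I verify that the concatenation $\gamma'$ satisfies the $A'$-uniformity condition \eqref{eq:uniformity} with respect to $\partial(\Omega\setminus S) = (X\setminus\Omega)\cup S$. For points $\gamma'(t)$ on the outer pieces $\gamma|_{[0,t_1]}$ and $\gamma|_{[t_2,1]}$, the distance to $X\setminus\Omega$ is supplied by the $A_1$-uniformity of $\gamma$, while the distance to $S$ is controlled by combining Lemma \ref{lem:basicuniflemma} with the fact that $d(\gamma(t_j),S)=0$ at the splicing points (so the slack grows as $t$ moves away from $t_1,t_2$). For points on $\tilde\gamma$, the distance to $S$ follows from $A_2$-co-uniformity, and the distance to $X\setminus\Omega$ is controlled through the iterative construction above. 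The principal difficulty is the bookkeeping: tracking how the uniformity constant degrades through the recursion and producing a single final constant $A' = A'(A_1,A_2,C,D,\beta)$ as asserted in the theorem.
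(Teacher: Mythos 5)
Your splicing scheme has a structural flaw that no amount of bookkeeping can repair: every splice is anchored on the boundary of the forbidden region. The spliced curve $\gamma'$ passes through $\gamma(t_1),\gamma(t_2)\in\partial S$, and since $S$ is open, $d(\gamma(t_j),(\Omega\setminus S)^c)=0$ there, while both complementary subarcs of $\gamma'$ at these interior points have positive diameter (in general comparable to $\diam(S)$ or to $d(x,y)$); hence \eqref{eq:uniformity} fails at $t_j$ for every finite $A'$. Concretely, take $\Omega$ the closed unit ball in $\R^2$, $S$ the open ball of radius $1/4$ about the origin, $x=(1/2,0)$, $y=(-1/2,0)$ and $\gamma$ the straight segment: your $\gamma'$ passes through $(\pm 1/4,0)\in\partial S$ with both sides of diameter at least $1/4$, so it is not $A'$-uniform in $\Omega\setminus S$ for any $A'$. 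The same defect recurs in your recursion, where the replacement arcs are anchored at points of $\partial\Omega$, which are again interior points of the final curve at zero distance from $(\Omega\setminus S)^c$. There is also a problem away from the splice points: the outer pieces $\gamma|_{[0,t_1]}$ and $\gamma|_{[t_2,1]}$ were chosen using only the uniformity of $\Omega$ and may come arbitrarily close to $S$ at times far from $t_1,t_2$; Lemma \ref{lem:basicuniflemma} controls only $d(\gamma(t),\Omega^c)$, so your appeal to it for the distance to $S$ has no support. A correct construction must modify $\gamma$ on the whole region where it is too close to $S$ relative to the smaller subarc diameter, and must enter the detour from points already at proportional distance from $S$, not from $\partial S$ itself.

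The second genuine gap is the recursion itself. When the detour can leave $\Omega$ you replace each excursion by an $A_1$-uniform curve in $\Omega$ and ``recursively re-apply the $S$-avoidance construction,'' asserting that doubling, quasiconvexity and $\beta=d(S,\partial\Omega)/\diam(S)>0$ bound the number of iterations; no such bound is proved, and none is evident. The replacement arcs join points of $\partial\Omega$ at mutual distance up to roughly $A_2\diam(S)$, so they can re-enter a neighborhood of $S$ (indeed $\overline S$ itself) at the same scale as before; each round degrades the constant multiplicatively, and without either a bounded iteration count or a geometric decay of scales the process need not terminate, let alone yield a curve with constant $A'(A_1,A_2,C,D,\beta)$. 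Resolving precisely this tension---keeping the curve proportionally far from $S$ without being forced proportionally close to $\Omega^c$, with doubling and quasiconvexity used to build the connecting arcs in the region between $S$ and $\partial\Omega$---is the technical heart of the statement. Note that the present paper does not reproduce that argument but defers to \cite[Theorem 4.22]{bigpaper}; your outline leaves exactly the deferred part unproven.
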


We will also need the following ``collared'' estimate for neighborhoods of uniform domains.
In general, for non-empty subsets $E$ of $X$
we define

\begin{equation}\label{eq:neighborhood}
N_r(E) = \bigcup_{e \in E} B(e,r).
\end{equation}

\begin{lemma} \label{lem:neighborhood} 
Let $A,D \geq 1$ and let $(X,d,\mu)$ be a $D$-
doubling space. There are constants $C_N=C_N(D,A) \geq 1$ and $b=b(D,A) > 0$ 
so that 
for every nonempty $A$-uniform subset $U$ of $X$, 
every $x \in U$, every $r \in (0,{\rm diam}(U)]$, and every $\delta \in (0,1)$
it holds 
that
$$
\mu(B_U(x,r))
\cap N_{\delta r}(U^c)) \leq C_N \delta^b \mu(
B_U(x,r)
).
$$
\end{lemma}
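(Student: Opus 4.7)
The plan is to combine uniformity with doubling to establish a porosity-type estimate at a single scale, then iterate via a Vitali covering to propagate this into the desired $\delta^b$ decay.

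\emph{Porosity and base case.} First I claim that for every $y \in U$ and every $s \leq \diam(U)$ there exists $z \in U$ with $d(y, z) \leq s/4$ and $d(z, U^c) \geq s/(8A)$. Since $s \leq \diam(U)$, the intermediate value theorem applied along an $A$-uniform curve from $y$ to any sufficiently distant point yields $y' \in U$ with $d(y, y') = s/2$. Then on an $A$-uniform curve $\gamma\co[0,1] \to U$ between $y$ and $y'$, the intermediate value theorem applied to $t \mapsto d(y, \gamma(t))$ gives $t^* \in [0,1]$ with $d(y, \gamma(t^*)) = s/4$. Setting $z = \gamma(t^*)$, both $\diam(\gamma|_{[0, t^*]})$ and $\diam(\gamma|_{[t^*, 1]})$ are at least $s/4$, so the uniformity inequality \eqref{eq:uniformity} yields $d(z, U^c) \geq s/(4A)$. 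With $\delta_0 = 1/(16A)$, the ball $B(z, \delta_0 s)$ sits inside $B_U(y, s)$ and is disjoint from $N_{\delta_0 s}(U^c)$, and the $D$-doubling property gives $\mu(B(z, \delta_0 s)) \geq \eta_1 \mu(B_U(y, s))$ for some $\eta_1 = \eta_1(A, D) > 0$. This produces the single-scale base inequality
\begin{equation}\tag{$\ast$}
\mu\bigl(B_U(y, s) \cap N_{\delta_0 s}(U^c)\bigr) \leq (1 - \eta_1)\, \mu(B_U(y, s)).
\end{equation}

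\emph{Iteration and conclusion.} Now iterate $(\ast)$ at the scales $\delta_0^k r$. Writing $R_k = B_U(x, r) \cap N_{\delta_0^k r}(U^c)$, apply the $5B$-Vitali lemma to $\{B(y, \delta_0^k r) : y \in R_k\}$ to extract a disjoint subfamily $\{B(y_j, \delta_0^k r)\}_j$ with $R_k \subset \bigcup_j B(y_j, 5\delta_0^k r)$. Inside each $B_U(y_j, \delta_0^k r)$, apply $(\ast)$ to isolate the good portion $H_j = B_U(y_j, \delta_0^k r) \setminus N_{\delta_0^{k+1} r}(U^c)$ with $\mu(H_j) \geq \eta_1 \mu(B_U(y_j, \delta_0^k r))$. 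The $H_j$ are pairwise disjoint, each is at distance $\geq \delta_0^{k+1} r$ from $U^c$, and each lies in $B_U(x, 2r) \cap N_{\delta_0^{k-1} r}(U^c)$. The $5B$ cover yields $\sum_j \mu(B_U(y_j, \delta_0^k r)) \geq \mu(R_k)/D^3$, so summing the $H_j$ produces the two-step comparison
\[
\mu\bigl(B_U(x, 2r) \cap N_{\delta_0^{k+1} r}(U^c)\bigr) \leq \mu\bigl(B_U(x, 2r) \cap N_{\delta_0^{k-1} r}(U^c)\bigr) - \tfrac{\eta_1}{D^3}\, \mu(R_k).
\]
Combined with the monotonicity $\mu(R_{k+1}) \leq \mu(R_k)$ and with doubling of $\mu$ restricted to $U$ (which itself follows from the porosity step and the $D$-doubling of $\mu$, via the lower bound $\mu(B(x,r)\cap U)\gtrsim \mu(B(x,r))$ for $x\in U$), this recursion produces the geometric decay $\mu(R_k) \leq C_0 \rho^k \mu(B_U(x, r))$ for some $\rho = \rho(A, D) \in (0, 1)$ and $C_0 = C_0(A, D)$. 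For any $\delta \in (0, 1)$, choosing $k$ with $\delta_0^{k+1} < \delta \leq \delta_0^k$ produces $\mu(B_U(x, r) \cap N_{\delta r}(U^c)) \leq \mu(R_k) \leq C_N \delta^b \mu(B_U(x, r))$ with $b = -\log \rho/\log(1/\delta_0)$ and $C_N = C_0/\rho$, all depending only on $A$ and $D$.

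\emph{Main obstacle.} The hardest step is the iteration, specifically extracting geometric decay from the two-step recursion. The good portions $H_j$ naturally extend beyond $B_U(x, r)$ into $B_U(x, 2r)$ and into a shell of thickness $\delta_0^{k-1} r$ around $U^c$, rather than being confined to $R_k$. This forces the recursion to couple three consecutive levels $R_{k-1}, R_k, R_{k+1}$; balancing the constants $\delta_0, \eta_1$ and the doubling constant $D$ so that the composite two-step step strictly contracts is what ultimately yields the positive exponent $b$ and determines its dependence on $A$ and $D$.
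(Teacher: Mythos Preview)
The paper does not give a self-contained proof of this lemma; it simply cites \cite[Theorem 2.8 \& Lemma 4.2]{bjornuniform}, noting that uniform domains satisfy the corkscrew and ``local $b$-shell'' conditions. Your porosity/corkscrew step (the base case $(\ast)$) is correct and is exactly the corkscrew ingredient.

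The iteration step, however, has a genuine gap. From your Vitali argument you correctly derive
\[
a_{k+1} \;\leq\; a_{k-1} - c\,b_k,\qquad a_k \defeq \mu\bigl(B_U(x,2r)\cap N_{\delta_0^k r}(U^c)\bigr),\quad b_k \defeq \mu(R_k),
\]
together with the monotonicity $b_{k+1}\leq b_k$ and $b_k\leq a_k$. But these relations do \emph{not} force geometric decay of $b_k$ (or of $a_k$). A concrete obstruction: set $a_k = 1/k$ and $b_k = 1/k^2$; then $a_{k-1}-a_{k+1}=\tfrac{2}{(k-1)(k+1)} \geq c\,b_k$ for small $c$, both sequences are monotone, yet neither decays geometrically. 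Doubling of $\mu|_U$ gives only the trivial bound $a_0 \leq D'\mu(B_U(x,r))$; it does \emph{not} give $a_k \lesssim b_k$, because the shell mass in $B_U(x,2r)$ can concentrate entirely in $B_U(x,2r)\setminus B_U(x,r)$. So the recursion you wrote, even with monotonicity and doubling, is too weak.

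The difficulty you identify in your ``Main obstacle'' paragraph is precisely the unfinished step: the good portions $H_j$ spill into a larger ball and a thicker shell, so the recursion couples two different sequences and never closes. One standard remedy is to work with the scale-invariant quantity
\[
\Phi(t)\;=\;\sup_{y\in U,\ 0<s\leq \diam U}\ \frac{\mu\bigl(B_U(y,s)\cap N_{ts}(U^c)\bigr)}{\mu(B_U(y,s))},
\]
so that the ``spilled'' ball $B_U(y,2s)$ is just another admissible ball in the supremum; then one shows a genuine one-step contraction $\Phi(\theta t)\leq (1-\eta')\Phi(t)$ for a suitable fixed $\theta\in(0,1)$. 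Alternatively one can run a chain argument from each shell point to the corkscrew ball, as in Bj\"orn--Shanmugalingam. Either route requires real additional work beyond what you have written.
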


This is part of \cite[Theorem 2.8 \& Lemma 4.2]{bjornuniform}
in which uniform domains are known to satisfy the so-called ``corkscrew'' and ``local $b$-shell'' conditions for balls. While, the proofs there are 
formulated for uniform domains that are open, they apply in our setting of closed uniform domains too,
since $\mu(\partial R)=0$ as concluded above. 
The following lemma allows us to exchange balls in an appropriate way for ones which
are strictly contained inside 
$\Omega$.

\begin{lemma}\label{lem:containedballs} 
Let $A,D \geq 1$, let $X$ be a geodesic $D$-doubling space, and let $\Omega \subset X$ be $A$-uniform. For any $\eta \in (0,1)$ there is a $\sigma=\sigma(D,A,\eta) \in (0,1)$ so that for any choice of
$x \in \Omega$, $r>0$, and $E \subset \Omega$
with
$$
\Theta_\mu(E,B(x,r)) \geq \eta,
$$
and any 
$s \in (0,\sigma)$
there exists $y \in B(x,4Ar)$ so that $d(y, \Omega^c) \geq \sigma  r$ and 
$$
\Theta_\mu(E,B(y,s r)) \geq \frac{1}{2D^2} 
\Theta_\mu(E,B(x,r)) \geq \frac{1}{2D^2} \eta.
$$
\end{lemma}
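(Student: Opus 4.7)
The plan is to first apply the collared estimate of Lemma \ref{lem:neighborhood} to cut $B_\Omega(x, r)$ down to a ``good subset'' $G$ of points uniformly far from $\Omega^c$, then use a Vitali-style pigeonhole to produce two points $y_1, y_2 \in G$ where the local density $\tau(y) \defeq \mu(E \cap B(y, sr))/\mu(B(y, sr))$ is respectively bounded away from $0$ and from $1$, and finally interpolate between them along an $A$-uniform curve and apply the intermediate value theorem to land on a point with balanced density.

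In detail, applying Lemma \ref{lem:neighborhood} with $U = \Omega$ and choosing $\sigma_1 = \sigma_1(D, A, \eta) > 0$ small enough gives
\[
\mu\bigl(B_\Omega(x, r) \cap N_{\sigma_1 r}(\Omega^c)\bigr) \leq \tfrac{\eta}{4}\,\mu(B_\Omega(x, r)).
\]
Set $G \defeq B_\Omega(x, r) \setminus N_{\sigma_1 r}(\Omega^c)$; the hypothesis $\Theta_\mu(E, B(x, r)) \geq \eta$ then yields $\mu(E \cap G), \mu(G \setminus E) \geq \tfrac{\eta}{2}\mu(B(x, r))$. Cover $G$ by balls $\{B(y_i, sr)\}_i$ with $y_i \in G$ and $\{B(y_i, sr/5)\}_i$ pairwise disjoint; doubling then yields $\sum_i \mu(B(y_i, sr)) \leq C(D)\,\mu(B(x, r))$. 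Since $\sum_i \tau(y_i)\,\mu(B(y_i, sr)) = \sum_i \mu(E \cap B(y_i, sr)) \geq \mu(E \cap G)$, and the analogous inequality holds with $1 - \tau$ in place of $\tau$ and $G \setminus E$ in place of $E \cap G$, one extracts points $y_1 \in G$ with $\tau(y_1) \geq c\eta$ and $y_2 \in G$ with $\tau(y_2) \leq 1 - c\eta$, for some $c = c(D) > 0$ (the stated $c = 1/(2D^2)$ is obtained by optimizing the covering bound). If either $\tau(y_i)$ already lies in $[c\eta, 1 - c\eta]$, we take $y = y_i$ and stop.

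Otherwise $\tau(y_1) > 1 - c\eta$ while $\tau(y_2) < c\eta$. Connect $y_1$ and $y_2$ by an $A$-uniform curve $\gamma \co [0,1] \to \Omega$. Since $y_1, y_2 \in B(x, r)$ we have $\diam(\gamma) \leq 2Ar$, hence $\gamma \subset B(x, 4Ar)$; Lemma \ref{lem:basicuniflemma} further gives $d(\gamma(t), \Omega^c) \geq \sigma_1 r/(4A)$. Setting $\sigma \defeq \sigma_1/(4A)$ and restricting to $s \in (0, \sigma)$ ensures $B(\gamma(t), sr) \subset \Omega$ all along the curve, so that $\tau(\gamma(t))$ is well-defined. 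The main technical point is the continuity of $t \mapsto \tau(\gamma(t))$: using that $X$ is geodesic, so that $\mu(\partial B(\cdot, \cdot)) = 0$, both $\mu(B(y, sr))$ and $\mu(E \cap B(y, sr))$ depend continuously on $y$, since $B(y, sr) \triangle B(y', sr)$ is contained in a thin spherical shell whose $\mu$-measure vanishes as $d(y, y') \to 0$. The intermediate value theorem then delivers $t^\ast$ with $\tau(\gamma(t^\ast)) \in [c\eta, 1 - c\eta]$, and $y \defeq \gamma(t^\ast)$ satisfies all of the desired properties.
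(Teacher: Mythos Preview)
Your approach is essentially the same as the paper's: use the collared estimate (Lemma \ref{lem:neighborhood}) to produce a ``good'' set $G$ away from $\Omega^c$, locate two points $y_1,y_2\in G$ at which the local density $\tau$ is close to $1$ and close to $0$ respectively, connect them by an $A$-uniform curve, invoke Lemma \ref{lem:basicuniflemma} to keep the curve away from $\Omega^c$, and apply the intermediate value theorem (using that $X$ geodesic makes $y\mapsto\mu(B(y,sr))$ and $y\mapsto\mu(E\cap B(y,sr))$ continuous).

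The one substantive difference is how you find $y_1,y_2$. The paper does not use a Vitali cover; instead it uses a Fubini-type averaging: with $F_1=E\setminus N_{2\delta r}(\Omega^c)$ one computes
\[
\int_{B(x,2r)\setminus N_{\delta r}(\Omega^c)}\frac{\mu(F_1\cap B(y,sr))}{\mu(B(y,sr))}\,d\mu(y)
\;\geq\;\frac{1}{D}\,\mu(F_1\cap B(x,r)),
\]
and then applies the pigeonhole to the integrand. This is precisely what produces the constant $\tfrac{1}{2D^2}$ in the statement. Your $5r$-covering argument gives only $\sum_i\mu(B(y_i,sr))\le D^4\mu(B(x,r))$, hence $c(D)$ of order $\tfrac{1}{2D^4}$, so the parenthetical claim that ``optimizing the covering bound'' recovers $\tfrac{1}{2D^2}$ is not justified. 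If you want the exact constant, replace the Vitali step by the Fubini computation; if any $c(D)$ suffices (as it does for every application in the paper), your argument is fine as written.
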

\begin{proof} 
With the same constants as above,
let $b=b(D,A)$ and $C_N=C_N(D,A) \geq 1$
be as in Lemma \ref{lem:neighborhood}.
Now put
$$
\sigma = 
\frac{1}{16A}
\left(\frac{\eta}{2 C_N}\right)^{1/b},
$$
so $\sigma \in (0,1)$. 
Let $s \in (0,\sigma)$ be given, and 
in what follows, 
put $\delta \defeq 8A\sigma < \frac{1}{2} (\frac{\eta}{2 C_N})^{1/b}$ and let 
\begin{align*}
F_i=
\begin{cases}
E \setminus N_{2\delta r}(\Omega^c),&
\text{if } i = 1
\\
B(x,r) \setminus (E \cup N_{2\delta r}(\Omega^c)),&
\text{if } i = 2
\end{cases}
\end{align*}
{
It follows from our hypothesis that
\begin{align} \label{eq:densityhypo}
\min\{\mu(E \cap B(x,r)), \mu(B(x,r) \setminus E)\} \geq \eta \mu(B(x,r)).
\end{align}
}
By our choices of $\sigma$ and $\delta$, 
\begin{align*}
C_N(2\delta)^b < 
C_N\cdot \frac{\eta}{2 C_N} \leq
\frac{1}{2}\Theta_\mu(E,B(x,r)),
\end{align*}
so for the case $i=1$, 
using $R=\Omega$ in Lemma \ref{lem:neighborhood} we have
\begin{align*}
\mu(E \setminus N_{2\delta r}(\Omega^c) \cap B(x,r)) \geq&~ 
\mu(E \cap B(x,r)) - \mu(\Omega \cap N_{\delta r}(\Omega^c) \cap B(x,r)) \\ \geq&~
\mu(E \cap B(x,r)) - C_N\delta^b \mu(B(x,r)\cap \Omega) \\ >&~
\Theta_\mu(E,B(x,r)) 
\mu(B(x,r)) - \frac{\Theta_\mu(E,B(x,r))}{2} \mu(B(x,r)).
\end{align*}
From this (and replacing $E$ with its complement, for $i=2$) we conclude
\begin{equation}\label{eq:lowerboundF}
\mu(F_i \cap B(x,r)) \geq \frac{\Theta_\mu(E,B(x,r))}{2} \mu(B(x,r)).
\end{equation}
Now consider the sets
\begin{align*}
\cD :=&~ \{(y,z) \in X \times X\colon y \in B(x,2r), d(y,\Omega^c) \geq \delta r, 
d(z,y) \leq s r\},
\\
\cD'_i :=&~ \{(y,z) \in X \times X\colon z \in B(x,r) \cap F_i, d(z,y) \leq s r\},
\end{align*}
where, clearly, $\cD'_i \subset \cD$. 
Using Fubini's Theorem, 
for $i=1,2$ 
it follows that
\begin{eqnarray*}
\int_{B(x,2r) \setminus N_{\delta r}(\Omega^c)} \frac{\mu(F_i \cap B(y,s r))}{\mu(B(y,s r))} ~d\mu(y) &=&  
\iint_{\cD} \frac{1_{F_i}(z)}{\mu(B(y,s r))} ~d\mu(z) ~d\mu(y) \\ 
&\geq& \frac{1}{D}
\iint_{\cD'_i} 
\frac{1_{F_i}(z)}{\mu(B(z,s r))} ~d\mu(z) ~d\mu(y) \\
&\geq&\int_{{F_i} \cap B(x,r)}\frac{1}{D\mu(B(z,s r))}\int_{B(z,s r)} 1~d\mu(y) ~d\mu(z) \\
&\geq& 
\frac{1}{D} \mu(F_i \cap B(x,r)).
\end{eqnarray*}
As a result,
there must thus exist some $y \in B(x,2r) \setminus N_{\delta r}(\Omega^c)$ so that
$$
\mu(B(x,2r) \setminus N_{\delta r}(\Omega^c)) \frac{\mu(F_i \cap B(y,s r))}{\mu(B(y,s r))} \geq \frac{1}{D} \mu(F_i \cap B(x,r)).
$$
By Equation \eqref{eq:lowerboundF} we have for such $y\in B(x,2r) \setminus N_{\delta r}(\Omega^c)$ that
$$
\frac{\mu(F_i \cap B(y,s r))}{\mu(B(y,s r))} \geq \Theta_\mu(E,B(x,r)) \frac{1}{2D} \frac{\mu(B(x,r))}{\mu(B(x,2r))} \geq \frac{1}{2D^2}\Theta_\mu(E,B(x,r)).
$$
Based on the previous choices for $F_i$, there must exist $y_i 
\in B(x,2r) \setminus N_{\delta r}(\Omega^c)$ so that 
\begin{equation} \label{eq:lowerbound_y}
\mu(F_i \cap B(y_i,s r)) \geq \frac{1}{2D^2} \Theta_\mu(E,B(x,r)) \mu(B(y_i,s r)).
\end{equation}
%
%
%
We can therefore assume
for $i=1,2$ that
\begin{equation}\label{eq:densityest}
\mu(B(y_i,sr)\setminus E) \leq
\frac{1}{2D^2} \mu(B(y_i,sr)). 
\end{equation}
Indeed, if \eqref{eq:densityest} failed for $i=1$ then since $d(x,y) \leq 2r \leq 2Ar$, the claim would follow for $y \defeq y_1$.  Similarly, if \eqref{eq:densityest} failed for $i=2$, then the claim would hold for $y \defeq y_2$ instead.

Let $\gamma\colon I=[0,1] \to X$ be a $A$-uniform curve joining $y_1$ and $y_2$, 
so 
by Lemma \ref{lem:basicuniflemma} we get
$$d(\gamma(t),\Omega^c) \geq \frac{1}{4A} \min\{d(y_1,\Omega^c), d(y_2,\Omega^c)\} \geq s r$$
for all $t \in I$. 
Since $X$ is geodesic and doubling, the map 
$$
T(t) \defeq 
\frac{\mu(B(\gamma(t), s r) \cap E)}{\mu(B(\gamma(t), s r))}
$$
is continuous. From Equation \eqref{eq:densityest} we get $T(0)\geq (1-\frac{1}{2D^2})\geq \frac{1}{2}$ and $T(1)\leq \frac{1}{2D^2}\leq \frac{1}{2}$, so by continuity, 
there is some $t$ so that 
$T(t) = 
\frac{1}{2}$. 
Moreover,
$$
d(\gamma(t)
,x) \leq \diam(\gamma) + d(y_1, x) \leq 
Ad(y_1,y_2) + r \leq 
2Ar + r \leq 
4A r
$$
follows from the definition of an $A$-uniform curve, in which case
$y = \gamma(t)$ satisfies $y \in B(x,4Ar)$ as well as
\begin{align*}
\Theta_\mu(E,B(y,s r)) = 
T(t) = 
\frac{1}{2} \geq
\frac{1}{2D^2} \geq \frac{1}{2D^2} \Theta_\mu(E,B(x,r))
\end{align*}
which is the desired conclusion.
\end{proof}

By a similar argument one also gets the following.


\begin{corollary}
\label{cor:smallerscale} 
Fix $D \geq 1$. Suppose $X \subset \R^d$ is connected, suppose $\mu=\lambda|_X$ is $D$-doubling, and fix $r \in (0,\diam(X))$. There is a constant $L=L(D)$ so that the following holds:\  
if $\eta \in (0,1)$, $E \subset X$, and $B(x,r) \subset X$ satisfy 
$$
\Theta_\mu(E,B(x,r)) \geq \eta
$$
then 
for each $
r_1 \in (0,r)$
there exists 
$
x_1 
\in X$ so that
$$
\Theta_\mu(E,B(x_1,r_1)) \geq \frac{1}{L}\eta.
$$
\end{corollary}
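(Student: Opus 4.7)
The plan is to combine Fubini-style averaging at the fixed scale $r_1$ with the connectedness of $X$. The key observation is that the density function $T(y) \defeq \mu(E \cap B(y,r_1))/\mu(B(y,r_1))$ is continuous on $X$:\ since $\mu = \lambda|_X$ and the boundary of each Euclidean ball has zero Lebesgue measure, both the numerator and denominator depend continuously on the center $y$, while the denominator is strictly positive by doubling. Thus $T\co X \to [0,1]$ is continuous, and by connectedness of $X$ the image $T(X)$ is a subinterval of $[0,1]$.

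Next I would use a Fubini argument to estimate the average of $T$ over the slightly larger ball $B(x,2r)$. Swapping the order of integration and using that for any $z \in E \cap B(x,r)$ one has $B(z,r_1) \subset B(x,2r)$ (since $r_1 < r$), together with the doubling bound $\mu(B(y,r_1)) \leq D \mu(B(z,r_1))$ whenever $d(y,z) < r_1$, yields
\[
\int_{B(x,2r)} T(y) \, d\mu(y) \;\geq\; \int_{E \cap B(x,r)} \frac{1}{D} \, d\mu(z) \;\geq\; \frac{\eta}{D} \, \mu(B(x,r)).
\]
Dividing by $\mu(B(x,2r)) \leq D \mu(B(x,r))$ produces an average of at least $\eta/D^2$, and running the same argument with $E$ replaced by $X \setminus E$ gives the corresponding estimate for $1 - T$. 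Averaging therefore yields points $y_1, y_2 \in B(x,2r) \cap X$ with $T(y_1) \geq \eta/D^2$ and $T(y_2) \leq 1 - \eta/D^2$.

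Finally, note that the hypothesis forces $\eta \leq 1/2$ (as $\Theta_\mu \leq 1/2$ in general), so $\eta/D^2 \leq 1/2 \leq 1 - \eta/D^2$ and the interval $I \defeq [\eta/D^2,\, 1 - \eta/D^2]$ is nonempty. The connected set $T(X)$ contains both $T(y_1)$ and $T(y_2)$, which lie on opposite sides of $I$ (or inside $I$ already), so by an elementary connectedness argument $T(X)$ must meet $I$. Picking any $x_1 \in X$ with $T(x_1) \in I$ yields $\Theta_\mu(E, B(x_1,r_1)) = \min(T(x_1),\, 1 - T(x_1)) \geq \eta/D^2$, so $L \defeq D^2$ works.

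No single step is difficult; the main care lies in the Fubini computation with the correct doubling factor, which closely parallels the corresponding passage in the proof of Lemma \ref{lem:containedballs} but is simpler, since no uniform-collar trimming of $E$ or its complement is required here.
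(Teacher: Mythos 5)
Your proof is correct and follows essentially the same route as the paper, which proves the corollary by running the Fubini averaging argument of Lemma \ref{lem:containedballs} at scale $r_1$ with $F_1=E$, $F_2=X\setminus E$, and then replacing the uniform-curve step by continuity of the density function together with connectedness of $X$. The only differences are cosmetic: you conclude via the target interval $[\eta/D^2,\,1-\eta/D^2]$ and get the explicit constant $L=D^2$, while the paper's adaptation lands on a point of density exactly $1/2$, yielding a comparable constant.
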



\begin{remark}
The proof of Corollary \ref{cor:smallerscale}  proceeds exactly as in 
Lemma \ref{lem:containedballs}, 
but with
the following substitutions:\

\begin{itemize}
\item
replace every instance of $sr$ by $r_1$; 
\item replace $N_{\delta r}(\Omega^c)$ and $N_{2\delta r}(\Omega^c)$ by empty sets, and remove where appropriate;
\item
use the entire space $X$, so 
no complementary set $\Omega^c$ is needed.  Instead, choose 
\begin{align*}
F_i =&~
\begin{cases}
E, & \text{if } i=1
\\
X\setminus E, & \text{if } i=2
\end{cases}
\\
\mathcal{D} =&~ \{(y,z) \in X \times X \colon d(x,y)<2r, d(z,y) \leq r_1 \}
\\
\mathcal{D}_i' =&~
\{(y,z) \in X \times X \colon z \in B(x,r) \cap F_i, d(z,y) \leq r_1 \}; 
\end{align*}
\item
to finish the proof, use directly 
that $X$ is connected and that the map
$$
z \mapsto \frac{\mu(B(z,r_1) \cap E)}{\mu(B(z,r_1))}
$$
is continuous, in which case no explicit curve $\gamma$ is needed.
\end{itemize}
\end{remark}

We need also a lemma on volumes
for subsets of interest in Euclidean spaces.

\begin{lemma} \label{lem:volobstacles} 
Under the hypotheses of Theorem \ref{thm:mainthm}, then $S_{\nseq} \subset \R^d$ is $d$-Ahlfors regular with constant $C_{AR}$ 
depending only on those constants from Definition \ref{def:sparsecoll} and the sequence $\nseq$.
\end{lemma}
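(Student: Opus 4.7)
The upper bound $\mu(B_X(x,r)) \leq \omega_d r^d$ is immediate from the identification $\mu = \lambda|_{S_\nseq}$, so the plan focuses on the lower bound $\mu(B_X(x,r)) \gtrsim r^d$. The strategy is to approximate $S_\nseq$ from above by the finite-scale truncations
\begin{equation*}
\Omega^N \defeq \Omega \setminus \bigcup_{k \leq N} \bigcup_{R \in \mathcal{R}_{\nseq,k}} R, \qquad N \in \mathbb{N},
\end{equation*}
for which $\Omega^N \searrow S_\nseq$, and then to quantify the measure loss at scales beyond $N$ via the summability hypothesis \eqref{eq:summability}.

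First I would show by induction on $N$ that each $\Omega^N$ is $A_N$-uniform in $\R^d$ for some $A_N$ that depends on $A, L, \delta, d$ and $n_1, \ldots, n_N$. The base case $N = 0$ is the hypothesis on $\Omega$. For the inductive step, conditions~(4) and~(5) in Definition~\ref{def:sparsecoll} imply $d(R, \partial \Omega^{N-1}) \geq \delta s_{N-1}$ for every $R \in \mathcal{R}_{\nseq,N}$, while condition~(3) yields $\diam(R) \leq L s_N$; hence the ``buffer ratio'' $d(R, \partial \Omega^{N-1})/\diam(R) \geq \delta n_N/L \geq \delta/L$ is bounded below, and Theorem~\ref{thm:cutout} applies to remove the (finitely many) obstacles $R \in \mathcal{R}_{\nseq,N}$ one at a time, preserving uniformity. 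Lemma~\ref{lem:arreg} then guarantees that $\Omega^N$ is $d$-Ahlfors regular with some constant $C_N$.

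Next, I would exploit condition~(6) together with the slicing estimate $\lambda_d(A) \leq 2r \cdot \Ha_{d-1}(\pi_i(A))$, valid for every $A \subset B(x,r)$ and every coordinate projection $\pi_i$. Whenever $s_{k-1} \leq r$ this yields $\lambda\bigl( B(x,r) \cap \bigcup_{R \in \mathcal{R}_{\nseq,k}} R\bigr) \leq 2L r^d/n_k^{d-1}$, and for any $N$ with $r \geq s_N$, summation over $k > N$ produces the tail bound
\begin{equation*}
\lambda\Bigl( B(x,r) \cap \bigcup_{k > N} \bigcup_{R \in \mathcal{R}_{\nseq,k}} R \Bigr) \leq 2L r^d T(N), \qquad T(N) \defeq \sum_{k > N} \frac{1}{n_k^{d-1}}.
\end{equation*}
Combining the two displays, for $r \geq s_N$ one has $\mu(B_X(x,r)) \geq r^d/C_N - 2L r^d T(N)$, and by the summability in \eqref{eq:summability} one may choose $N = N^*(\nseq)$ so that $2L T(N^*) \leq 1/(2C_{N^*})$, yielding the desired lower bound $\mu(B_X(x,r)) \geq r^d/(2C_{N^*})$ for every $r \geq s_{N^*}$.

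The hard part will be the small-scale regime $r < s_{N^*}$, where one is forced to take $N = N(r) > N^*$ and the Ahlfors constant $C_N$ a priori degrades with each additional scale cut. I expect to handle this by choosing $N(r)$ to be the unique index with $r \in (s_{N(r)}, s_{N(r)-1}]$ and exploiting that, by condition~(5), only $O_\delta(1)$ obstacles per scale $k \in \{N^*+1, \ldots, N(r)\}$ can actually intersect $B(x,r)$. Removing these \emph{locally relevant} obstacles one at a time through Theorem~\ref{thm:cutout} (each still with buffer ratio $\geq \delta/L$) should furnish a local Ahlfors regularity at scale $r$ with a constant expressible purely in terms of $A, L, \delta, d$ and tail data of $\nseq$; pairing this with the tail bound above would close the proof and give the final $C_{AR}$ depending on the constants of Definition~\ref{def:sparsecoll} and on the sequence $\nseq$.
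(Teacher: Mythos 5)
Your upper bound and your tail estimate are fine (the slicing bound $\lambda(A)\leq 2r\,\Ha_{d-1}(\pi_i(A))$ combined with condition (6) is a legitimate alternative to the paper's tail bound, which instead uses a packing argument based on conditions (3) and (5) and only needs $\sum_k n_k^{-d}<\infty$). The genuine gap is exactly where you flag ``the hard part,'' and your proposed fix does not close it. For $r<s_{N^*}$ you plan to remove the locally relevant obstacles at scales $k\in\{N^*+1,\dots,N(r)\}$ one at a time via Theorem \ref{thm:cutout}; but ``$O_\delta(1)$ obstacles per scale'' over $N(r)-N^*$ scales is an unbounded total as $r\to 0$, and each application of Theorem \ref{thm:cutout} degrades the uniformity constant, so the resulting local Ahlfors constant is \emph{not} ``expressible purely in terms of $A,L,\delta,d$ and tail data'' -- it depends on $r$. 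What saves the day (and is the paper's key observation) is condition (5) used globally across scales: if $r<\delta s_{J-1}/2$, then $B(x,r)$ meets \emph{at most one} obstacle from \emph{all} scales $k\leq J$ combined, so Theorem \ref{thm:cutout} ever needs to be applied only once, with a constant independent of the scale index. The scales $k$ with $s_{k-1}$ between roughly $r$ and $2r/\delta$ (boundedly many, since $n_k\geq 3$) must then be pushed into the tail; note your condition-(6) estimate does not cover them, since (6) requires $r\geq s_{k-1}$, whereas the paper's packing bound applies to all $k>J$.

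A second, related flaw is the choice of $N^*$ with $2LT(N^*)\leq 1/(2C_{N^*})$: this is circular, because your $C_N$ comes from iterating Theorem \ref{thm:cutout} over \emph{all} obstacles at scales $\leq N$ (their number grows like $\prod_{i\le N} n_i^d$) and nothing prevents $C_N$ from growing faster than $1/T(N)$ decays, so such an $N^*$ need not exist. The paper avoids this because its ambient regular set is only ever $\Omega$ or $\Omega\setminus R$ for a single obstacle, whose Ahlfors constant $C=C(A,d,D)$ is independent of the scale index, so the cutoff $K$ is chosen against a fixed constant; large radii are then immaterial since the Ahlfors constant is allowed to depend on $\nseq$ (monotonicity in $r$ handles $r\gtrsim s_{K-1}$). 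In short: the skeleton (regular ambient set minus a small tail) matches the paper, but without the ``at most one large obstacle per small ball'' mechanism your iteration of Theorem \ref{thm:cutout} does not yield uniform constants, and the argument as written does not prove the lemma.
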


\begin{proof}
Let $A,\delta, L$ be the constants for the uniform sparseness condition.
As $\mu$ is the restriction of Lebesgue measure, we clearly have
$\mu(B_{S_\nseq}(x,r)) \leq \omega_d r^d$, 
so it suffices to 
show the lower bound
in \eqref{eq:arreg}.
Since $n_k \in \N$ with $n_k \geq 3$ it clearly holds that
$$
\sum_{k=1}^\infty \frac{1}{n_k^d} < \sum_{k=1}^\infty \frac{1}{n_k^{d-1}}<\infty.
$$
If $Y = \Omega$, 
then Lemma \ref{lem:arreg} further implies $Y$ is $d$-Ahlfors regular with some constant $C(A,d)$. 
The same is true with a different constant, if $Y=\Omega \setminus R$ for any one $R \in \mathcal{R}_{\nseq,k}$, as Theorem \ref{thm:cutout} implies that $Y$ is $A'$-uniform, for some $A'$ depending solely on $A$ (and the doubling constant of $\R^d$). 
Either way, $Y$ is Ahlfors $d$-regular with a constant $C=C(A',D)$.

In the following, we prove a lower bound only for some small scales depending on the sequence $\nseq$.
In particular, choosing $K \in \N$ so that 
$$
\sum_{k=K}^\infty \frac{1}{n_k^d} \leq \frac{\delta^{d}}{4^{d+1} C 
L^{d}
},
$$
it suffices to prove a lower bound 
for $r \in (0, \delta s_{K-1}/2)$
only.  

To this end, choose $J \geq K$ so that 
$\frac{1}{2}\delta s_{J} < r < \frac{1}{2}\delta s_{J-1}$.
Let $x \in S_{\nseq}$. By condition (5) in Definition \ref{def:sparsecoll}, the ball $B(x,r)$ intersects at most one $R \in \mathcal{R}_{\nseq,k}$ for $k \leq J$. 
For any such
$R$, let $Y=\Omega \setminus R$; otherwise let $Y =\Omega$. 
In either case,
$Y$ is $C$-Ahlfors regular 
and satisfies
$$
B(x,r) \cap S_{\nseq} 
=
B(x,r) \cap Y \setminus \bigg(
\bigcup_{k>J} \bigcup_{R \in \mathcal{R}_{\nseq,k}} R
\bigg).
$$ 
{
Now put $\mathcal{R}_{\nseq,k}(x,r) \defeq \{ R \in \mathcal{R}_{\nseq,k} : R\cap B(x,r) \neq \emptyset \}$.}
As a special case, assuming first
that for all $k>J$ we have
\begin{equation}\label{eq:obstaclebound}
\mu\bigg(\bigcup_{R \in \mathcal{R}_{\nseq,k}} B(x,r) \cap R \bigg) \leq 
\Big(
\frac{4 L r}{\delta n_k}
\Big)^d,
\end{equation}
then summing over $k$ gives
$$
\mu\bigg(\bigcup_{k>J}\bigcup_{R \in \mathcal{R}_{\nseq,k}} B(x,r) \cap R \bigg) \leq 
\sum_{k=J}^\infty \mu\bigg(\bigcup_{R \in \mathcal{R}_{\nseq,k}} B(x,r) \cap R \bigg) \leq 
\bigg(\sum_{k=K}^\infty \frac{1}{n_k^d} \bigg)
\frac{4^d L^{d} 
r^d}{\delta^{d}}
\leq
\frac{r^d}{4C}
$$
and along with the previous equality of sets, the desired lower bound follows:
\begin{eqnarray*}
\mu(B(x,r) \cap S_{\nseq}) &=& 
\mu\bigg(B(x,r) \cap Y \setminus \bigcup_{k>J}\bigcup_{R \in \mathcal{R}_{\nseq,k}} R\bigg) \\ &=& 
\mu(B(x,r) \cap Y) - \mu\bigg(\bigcup_{k>J}\bigcup_{R \in \mathcal{R}_{\nseq,k}} B(x,r) \cap R \bigg) \geq \frac{r^d}{2C}.
\end{eqnarray*}
To 
see how 
estimate \eqref{eq:obstaclebound}
is valid in the general case,
observe that each $R \in \mathcal{R}_{\nseq,k} $ that intersects $B(x,r)$ can also be included in a ball $B(x_R, Ls_k)$ and that the rescaled balls $B(x_R,\delta s_{k-1}/2)$ are disjoint 
and 
are contained in $B(x,2 r)$.  Thus
there are at most 
$(
\frac{4 r}
{\delta s_{k-1}})^d$
such balls
and summing over all previous such $R$ yields
\begin{eqnarray*}
\mu\bigg(\bigcup_{R \in \mathcal{R}_{\nseq,k}} B(x,r) \cap R \bigg) \leq 
\sum_{
R \in \mathcal{R}_{\nseq,k}(x,r)
} \omega_d L^d s_k^d \leq
\omega_d L^d s_k^d  \frac{4^d r^d }{\delta^{d} s_{k-1}^d} =
\frac{\omega_d 4^d L^{d} r^d}{\delta^{d} n_k^d},
\end{eqnarray*}
which is \eqref{eq:obstaclebound} as desired.
\end{proof}

\section{Isoperimetry for Sierpi{\'n}ski sponges}\label{sec:isoperim}
In this section, 
we develop tools regarding isoperimetry, which are used to prove Theorem \ref{thm:mainthm}.  For completeness, we define Poincar\'e inequalities here, although, we shall quickly pivot to the equivalent notion of (relative) isoperimetry, which is what we actually prove and use.

\begin{definition} \label{def:PI}
Let 
$1 \leq p  < \infty$. A 
closed subset $X$ of $\R^d$, with 
$\mu = \lambda|_X$, 
is said to satisfy a 
{\sc $p$-Poincar\'e inequality 
}
(with constants $C,\Lambda \geq 1$) 
 if for all Lipschitz functions $f\colon\ X \to \mathbb{R}$ and all $x \in X$ and $r \in (0,{\rm diam}(X))$ we have for balls $B \defeq B(x,r)$ that
\begin{equation}\label{eq:defPI}
\fint_B |f-f_B| ~d\mu \leq C r  \left(\fint_{\Lambda B} 
|\nabla f|^p
~d\mu \right)^{1/p}.
\end{equation}
\end{definition}

Here, for any 
locally Lebesgue integrable $f \co X \to \R$ its average value on a ball is
$$
f_B \defeq \fint_B f ~ d\mu \defeq \frac{1}{\mu(B)} \int f ~d\mu,$$
and 
by Rademacher's Theorem, the restriction of the gradient $\nabla f$ to $X$ is well-defined almost-everywhere.

This inequality is essentially a local property, as the following version of \cite[Theorem 1.3]{bjornlocal} shows. This quantitative version does not appear explicitly in the reference, but follows directly from their argument.

\begin{lemma}\label{lem:local}Suppose that $(X,d,\mu)$ is a 
 bounded,
connected, $D$-doubling metric measure space. 
If $X$ satisfies  Definition \ref{def:PI} with constant $(C,\Lambda)$ for all $r \in (0,r_0)$, then $X$ satisfies Definition \ref{def:PI} for all $r>0$ with constants $C_1=C_1(C,\Lambda, \frac{\diam(X)}{r_0})$ and $\Lambda_1=\Lambda_1(C,\Lambda,\frac{\diam(X)}{r_0})$.

\end{lemma}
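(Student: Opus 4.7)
The plan is to reduce the case $r \geq r_0$ to the local hypothesis by a covering-and-chaining argument; the case $r < r_0$ is already handled by the hypothesis with the same $(C, \Lambda)$. Since $X$ is bounded, I may restrict attention to $r \in [r_0, \diam(X)]$, because for $r \geq \diam(X)$ one has $B(x,r) \cap X = X$ and the problem reduces to the endpoint $r = \diam(X)$.

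First, I would exploit boundedness and doubling to cover $X$ by a uniformly bounded number of small balls. Iterating metric doubling (with constant $N = D^4$) produces a cover $X = \bigcup_{j=1}^M B_j$ where $B_j = B(y_j, r_0/(4\Lambda))$, $y_j \in X$, and $M = M(D, \diam(X)/r_0)$. Doubling also yields $\mu(X) \leq C'\mu(B_j)$ with $C' = C'(D, \diam(X)/r_0)$. Because $X$ is connected, any two indices $j, j'$ are joined by a chain $j = j_1, j_2, \ldots, j_\ell = j'$ of length $\ell \leq M$ with $B_{j_i} \cap B_{j_{i+1}} \neq \emptyset$, and each consecutive pair is contained in a common ball $\tilde B_i \defeq B(y_{j_i}, 3r_0/(4\Lambda))$. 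Since $\Lambda \tilde B_i$ has radius $< r_0$, the local Poincar\'e inequality applies to $\tilde B_i$; the standard trick of averaging $f - f_{\tilde B_i}$ against $B_{j_i}$ and $B_{j_{i+1}}$ separately, combined with the doubling estimate $\mu(\tilde B_i) \leq D^{O(1)} \mu(B_{j_i})$, produces
\[
|f_{B_{j_i}} - f_{B_{j_{i+1}}}| \leq C_2 \, r_0 \bigg( \fint_{\Lambda \tilde B_i} |\nabla f|^p \, d\mu \bigg)^{1/p},
\]
with $C_2 = C_2(C, D)$.

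Telescoping along the chain of length at most $M$ and bounding each local average of $|\nabla f|^p$ by a constant times $\fint_X |\nabla f|^p d\mu$ (using doubling and $\Lambda \tilde B_i \subset X$), I would obtain
\[
|f_{B_j} - f_{B_{j'}}| \leq C_3 \, \diam(X) \bigg( \fint_X |\nabla f|^p \, d\mu \bigg)^{1/p}
\]
with $C_3 = C_3(C, \Lambda, D, \diam(X)/r_0)$. Combining this control of the $\{f_{B_j}\}$ with the local PI on each $B_j$ and summing over the cover gives a global inequality $\fint_X |f - f_X| \, d\mu \leq C_4 \diam(X) (\fint_X |\nabla f|^p \, d\mu)^{1/p}$, where $f_X \defeq \fint_X f \, d\mu$. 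Finally, for the target ball $B = B(x,r)$ with $r_0 \leq r \leq \diam(X)$, doubling yields $\mu(B) \geq c \mu(X)$ with $c = c(D, \diam(X)/r_0)$, and the choice $\Lambda_1 = \max(\Lambda, \diam(X)/r_0)$ ensures $X \subset \Lambda_1 B$, so $\fint_{\Lambda_1 B} |\nabla f|^p \, d\mu$ is comparable to $\fint_X |\nabla f|^p \, d\mu$. Replacing $f_B$ with $f_X$ (at the cost of a factor two) and using $r_0 \leq r$ converts the global bound into the desired $\fint_B |f - f_B| \, d\mu \leq C_1 r (\fint_{\Lambda_1 B} |\nabla f|^p \, d\mu)^{1/p}$.

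The main obstacle is the bookkeeping of constants: one must fix the covering radius so that its $\Lambda$-dilate stays below $r_0$, and must accept that the number $M$ of covering balls --- and hence the number of telescoping steps --- depends polynomially on $\diam(X)/r_0$. This is exactly why $C_1$ and $\Lambda_1$ in the conclusion must depend on $\diam(X)/r_0$. No step requires genuinely new ideas beyond the Bj\"orn--Bj\"orn framework; the task is simply to make their argument quantitative.
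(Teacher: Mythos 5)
Your chaining-and-covering argument is correct and is essentially the same route the paper takes: the paper offers no proof of its own but defers to the argument of Bj\"orn--Bj\"orn \cite{bjornlocal}, which is precisely this kind of quantitative covering/telescoping scheme, and your outline (with the nerve-connectivity of the cover supplied by connectedness of $X$, and noting that the constant bounding $\mu(X)/\mu(B_j)$ also depends on $\Lambda$ through the covering radius $r_0/(4\Lambda)$) fills in the quantitative dependence on $\diam(X)/r_0$ exactly as intended. No genuine gap.
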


\subsection{Definition of isoperimetry and iteration}

Lahti and Korte discuss in \cite{lahtikorteisoperim} various criteria that are equivalent 
to a $1$-Poincar\'e inequality. 
To formulate them, 
we will require two additional notions.
If $\mu$ is a doubling measure on $X$, then put 
$$
h(B(x,r)) \defeq \frac{1}{r}\mu(B(x,r))
$$
and define for any set $E \subset X$ the {\sc codimension-one Hausdorff content} as
$$
\Ha_{h,\delta}(E) \defeq \inf \bigg\{\sum_{i=1}^\infty h(B(x_i,r_i)) \ \  \bigg| \ \  r_i \leq \delta, E \subset \bigcup_{i=1}^\infty B(x_i,r_i)\bigg\},
$$
and the {\sc codimension-one Hausdorff measure} as
$$
\Ha_{h}(E) \defeq \lim_{\delta \to 0} \Ha_{h,\delta}(E).
$$

If we use $h(B(x,r))=r^{s}$ instead, we obtain the $s$-dimensional (spherical) Hausdorff measure $\Ha_s(E)$ (up to a scalar multiple, depending on convention).

\begin{remark}\label{rmk:comparabiliy}
Note that if $\mu$ is Ahlfors $Q$-regular with constant $C_{AR}$, then $\Ha_h$ is comparable to 
$(Q-1)$-dimensional (spherical) 
Hausdorff measure $\Ha_{Q-1}$. Indeed, we have
$$
\frac{1}{C_{AR}} \Ha_{Q-1}(E) \leq \Ha_h(E) \leq C_{AR} \Ha_{Q-1}(E),
$$
as follows easily from the definition. We will use this fact, as we will be giving bounds for the $d-1$-dimensional Hausdorff measure instead of the codimension-one Hausdorff measure. 
\end{remark}

We are now ready to introduce the criterion \cite[Theorem 1.1]{lahtikorteisoperim}.

\begin{theorem}[Korte-Lahti] \label{thm:11PI} A doubling metric measure space $(X,d,\mu)$ satisfies the $1$-Poincar\'e inequality if and only if 
there are constants $C_S, \Lambda_S \in [1,\infty)$ such that for every ball $B=B(x,r)$ and any Borel set $E \subset X$ we have
\begin{equation} \label{eq:isoperim}
\Theta_\mu(E,B) \leq
C_{S} r \frac{\Ha_{h}(\partial E \cap \Lambda_S B)}{\mu(\Lambda_S B)}.
\end{equation}
\end{theorem}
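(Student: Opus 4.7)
The plan is to prove the two implications separately, treating each as a layer-cake / coarea argument on one side and a Lipschitz-approximation argument on the other. Throughout, I would adopt the upper-gradient framework so that $|\nabla f|$ in Definition \ref{def:PI} can be read as any upper gradient of $f$, and I would freely use the doubling condition to swap $\mu(B)$ and $\mu(\Lambda B)$ up to constants.

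For the direction \emph{isoperimetry $\Rightarrow$ $1$-Poincar\'e}, I would fix a Lipschitz $f$ and a ball $B=B(x,r)$, and let $m$ be a median of $f$ on $B$, so that both $\{f>m\}\cap B$ and $\{f<m\}\cap B$ have measure at most $\mu(B)/2$. A standard cheap inequality gives $\fint_B |f-f_B|\,d\mu \leq 2\fint_B|f-m|\,d\mu$, so it suffices to bound the latter. By the layer-cake formula,
\begin{equation*}
\int_B |f-m|\,d\mu \;=\; \int_0^\infty \mu(\{f>m+t\}\cap B)\,dt \;+\; \int_0^\infty \mu(\{f<m-t\}\cap B)\,dt.
\end{equation*}
By the median choice each integrand equals $\Theta_\mu(E_t,B)\,\mu(B)$ for $E_t=\{f>m+t\}$ (resp.\ $\{f<m-t\}$), so the hypothesis \eqref{eq:isoperim} applies, yielding $\mu(E_t\cap B)\leq C_S\,r\,\mu(B)\,\mathcal H_h(\partial E_t\cap \Lambda_S B)/\mu(\Lambda_S B)$. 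Integrating over $t$ and appealing to the coarea-type identity $\int_{-\infty}^\infty \mathcal H_h(\partial\{f>s\}\cap U)\,ds = \int_U |\nabla f|\,d\mu$ (valid in doubling metric measure spaces, proved by Miranda or Ambrosio-DiMarino via BV), I arrive at
\begin{equation*}
\fint_B |f-f_B|\,d\mu \;\leq\; 2C_S \cdot r\cdot \frac{\mu(B)}{\mu(\Lambda_S B)}\cdot\fint_{\Lambda_S B}|\nabla f|\,d\mu \cdot \frac{\mu(\Lambda_S B)}{\mu(B)},
\end{equation*}
which simplifies after doubling to the desired $1$-Poincar\'e inequality with constants depending only on $C_S,\Lambda_S,D$.

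For the direction \emph{$1$-Poincar\'e $\Rightarrow$ isoperimetry}, I would test the inequality against Lipschitz approximants of $1_E$. Concretely, for a Borel set $E\subset X$ and $\epsilon>0$, let $f_\epsilon(x)=\max\{0,\,1-d(x,E)/\epsilon\}$; this is $1/\epsilon$-Lipschitz, bounded by $1_{N_\epsilon(E)}$ from above and agreeing with $1_E$ on $E$. Applied to the ball $B$, the $1$-Poincar\'e inequality gives
\begin{equation*}
\fint_B |f_\epsilon - (f_\epsilon)_B|\,d\mu \;\leq\; C\,r\,\fint_{\Lambda B}|\nabla f_\epsilon|\,d\mu,
\end{equation*}
and one lets $\epsilon\to 0$. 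On the left, $(f_\epsilon)_B\to \mu(E\cap B)/\mu(B)$ and a direct computation shows $\fint_B |1_E - (1_E)_B|\,d\mu = 2\,\Theta_\mu(E,B)(1-\Theta_\mu(E,B))\geq \Theta_\mu(E,B)$. On the right, $|\nabla f_\epsilon|\leq \epsilon^{-1}\mathbf 1_{N_\epsilon(E)\setminus E}$ is supported near $\partial E$, so one needs the identification
\begin{equation*}
\limsup_{\epsilon\to 0}\;\frac{\mu\bigl(N_\epsilon(\partial E)\cap \Lambda B\bigr)}{\epsilon} \;\lesssim\; \mathcal H_h(\partial E\cap \Lambda' B)
\end{equation*}
for some enlarged ball $\Lambda' B$. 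This is the Minkowski-content vs.\ codimension-one Hausdorff-content comparison and holds in $D$-doubling spaces with a dimensional constant (see Ambrosio's work on BV in doubling spaces).

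The main obstacle is precisely this last comparison: one must pass from the bulk estimate on $|\nabla f_\epsilon|$ to a content estimate on $\partial E$, which is not literally $\mathcal H_h$ but rather a Minkowski-type quantity. I would handle this by first reducing to sets $E$ with finite perimeter (arbitrary Borel $E$ with $\mathcal H_h(\partial E\cap \Lambda B)=\infty$ make \eqref{eq:isoperim} trivial), then using a Vitali cover of $\partial E$ by balls $B(x_i,r_i)$ with $r_i\leq\epsilon$ realising the content up to a factor of $2$, noting that $\mu(N_\epsilon(\partial E)\cap \Lambda B)\lesssim \sum_i \mu(B(x_i,2r_i))\lesssim \sum_i r_i h(B(x_i,r_i))\leq 2\epsilon\,\mathcal H_{h,\epsilon}(\partial E\cap \Lambda' B)$, and finally letting $\epsilon\to 0$ to recover $\mathcal H_h$. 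Combining the two sides gives $\Theta_\mu(E,B)\leq C\,r\,\mathcal H_h(\partial E\cap \Lambda' B)/\mu(\Lambda' B)$, which is \eqref{eq:isoperim}. The $\Lambda'$ arising from the Vitali enlargement is absorbed into $\Lambda_S$.
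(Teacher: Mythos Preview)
The paper does not prove Theorem~\ref{thm:11PI}; it is quoted as \cite[Theorem 1.1]{lahtikorteisoperim} and used as a black box. So there is no ``paper's own proof'' to compare against, and your proposal should be read as an independent attempt at the Korte--Lahti result.

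Your forward direction (isoperimetry $\Rightarrow$ $1$-Poincar\'e) is the standard median/layer-cake/coarea argument and is fine.

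The reverse direction has a genuine gap. You need
\[
\limsup_{\epsilon\to 0}\frac{\mu\bigl(N_\epsilon(\partial E)\cap \Lambda B\bigr)}{\epsilon}\;\lesssim\;\mathcal H_h(\partial E\cap \Lambda' B),
\]
i.e.\ an upper bound on Minkowski content by the codimension-one Hausdorff measure. This inequality goes the wrong way in general: Minkowski content dominates Hausdorff measure, not the reverse. Your Vitali argument does not repair this. If $\{B(x_i,r_i)\}$ with $r_i\le\epsilon$ covers $\partial E$, then $N_\epsilon(\partial E)\subset\bigcup_i B(x_i,r_i+\epsilon)\subset\bigcup_i B(x_i,2\epsilon)$, \emph{not} $\bigcup_i B(x_i,2r_i)$; and $\mu(B(x_i,2\epsilon))$ is not controlled by $r_i\,h(B(x_i,r_i))=\mu(B(x_i,r_i))$ when $r_i\ll\epsilon$. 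Even if you force $r_i=\epsilon$, the resulting sum is a fixed-scale (box-counting) content, which can be strictly larger than $\mathcal H_{h}(\partial E)$ and need not converge to it as $\epsilon\to 0$.

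The route taken in \cite{lahtikorteisoperim} avoids this by working through the perimeter measure: the $1$-Poincar\'e inequality first gives relative isoperimetry with $P(E,\Lambda B)$ on the right (your Lipschitz-approximant computation does yield this), and then one invokes the structure theory of finite-perimeter sets in doubling PI spaces (Ambrosio, Miranda) to compare $P(E,\cdot)$ with $\mathcal H_h\llcorner\partial^*E$. That comparison is itself nontrivial and uses the Poincar\'e inequality; it is not a pure covering lemma. If you want to make your sketch complete, replace the Minkowski step by a citation to that comparison rather than the Vitali argument.
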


Inequality \eqref{eq:isoperim} is known as a  {\sc relative isoperimetric inequality}:\ once a ball is given, the measure of 
the boundary of 
$E$ 
within that ball controls the density of $E$ and its complement relative to that ball.
To specify the dependence on parameters, we sometimes refer to \eqref{eq:isoperim} as a {\sc relative isoperimetric inequality with constants $C_S$ and $\Lambda_S$.}


The relative isoperimetric inequality
can be considered as a property of every subset $E$ in $X$, 
relative to
every ball in the space $X$. However, it will be helpful to introduce a ``density level'' in our proofs, i.e.\ to consider isoperimetric inequalities only for ``large enough'' sets.

\begin{definition} \label{defn:relisoconsts}
Let $\tau, C > 0$. A metric measure space $(X,d,\mu)$ is said to satisfy a {\sc $(\tau, C)$-isoperimetric inequality} if there is a $\Lambda \geq 1$ such that every Borel set $E \subset X$ and ball $B=B(x,r)$ satisfies the following:\
if $\Theta_\mu(E,B) \geq \tau$, then 
\begin{equation}\label{eq:def-atlevel}
\Theta_\mu(E,B) \leq Cr\frac{\Ha_{h}(\partial E \cap \Lambda B\big)}{\mu(\Lambda B)}.
\end{equation}

To specify the dependence on parameters, we say that $X$ satisfies a {\sc $(\tau, C)$-isoperimetric inequality with inflation factor $\Lambda$}.  
\end{definition}

Since the left hand side of 
\eqref{eq:def-atlevel} is bounded below by $\tau$, it would really suffice to give just a constant lower bound. However, we wish the constants to match those in Equation \eqref{eq:isoperim} as closely as possible, and simply to weaken the condition by restricting the sets considered.


\begin{proof}[Proof of Theorem \ref{thm:largeenough}]
Given parameters $D \geq 2$, $\tau \in (0,D^{-3}]$, $C > 0$, and $\Lambda \geq 1$,
we will assume 
that $X$ satisfies 
 the $(C,\tau)$-isoperimetric inequality with inflation factor $\Lambda$ and prove the isoperimetric inequality \eqref{eq:isoperim} with $C_S = 
D^{7+\log_2(\Lambda)}C$
and
$\Lambda_S=
2\Lambda$. 

Let $E$ be any Borel subset of $X$. 
Without loss of generality assume 
$$
\mu(E\cap B(x,r)) < \frac{1}{2}\mu(B(x,r)),
$$
otherwise we prove the inequality by replacing $E$ by  $E^c$. 
Now if 
$x \in X$ and $r > 0$ satisfy
$$
\frac{\mu(E\cap B(x,2r))}{\mu(B(x,2r))} \geq \frac{1}{D^3}
$$
then 
by hypothesis, the inequality equation \eqref{eq:def-atlevel} is exactly what we want for \eqref{eq:isoperim} except for an extra factor of $D$ arising when the quantity $\mu(\Lambda B)$ is adjusted for $\mu(2\Lambda B)$.

We can therefore assume that
$$
\frac{\mu(E\cap B(x,2r))}{\mu(B(x,2r))} < 
\frac{1}{D^3}.
$$
Consider the set of density points
 $$
 S = \Big\{
 z \in B(x,r) ~\Big|~ \lim_{t \to 0} \frac{\mu(B(z,t) \cap E)}{\mu(B(z,t))}  = 1
 \Big\}.
 $$
 By Lebesgue differentiation, we have $\mu(S) = \mu(E \cap B(x,r))$ and for each $z \in S$ we have
$$
\frac{\mu(E \cap B(z,r))}{\mu(B(z,r))} \leq 
D^2\frac{\mu(E \cap B(x,2r))}{\mu(B(z,4r))} \leq 
D^2\frac{\mu(E \cap B(x,2r))}{\mu(B(x,2r))} < 
\frac{1}{D}.
$$
 Thus, from Lemma \ref{lem:goodscale} there exists $r_z \leq r$ with
\begin{equation}\label{eq:densityestz}
    \frac{1}{D^2} <
    \frac{\mu(B(z,r_z) \cap E)}{\mu(B(z,r_z))} < 
    \frac{1}{D}.
\end{equation}
By the $5B$-covering lemma (see e.g. \cite[Theorem 2.1]{Mattila1999}), there is a countable subset $\{z_i\}_{i\in I}$ of $S$ 
 and radii $s_i = r_{z_i}$ such that $\{B(z_i,\Lambda s_i)\}_{i\in I}$ is pairwise disjoint, that $B_i \defeq B(z_i,s_i)$ satisfy \eqref{eq:densityestz}, and that
$$
S \subset \bigcup_{i \in I} B(z_i, 5\Lambda s_i).
$$
 By \eqref{eq:densityestz} 
 and the hypotheses of the theorem, each 
 $E \cap B_i$ 
 satisfies 
 $\Theta_\mu(E,B_i) \geq \frac{1}{D^2} > \tau$ and hence the $(\tau,C)$-isoperimetric inequality as well:
 \begin{equation}\label{eq:lowerboundary}
 \Ha_{h}(\partial E \cap  B(z_i,\Lambda s_i)) \geq \frac{\mu(B(z_i, \Lambda s_i))}{C s_i D^2} \geq \frac{\mu(B(z_i, 5\Lambda s_i))}{C s_i D^5}.
\end{equation}
From this and the inclusion
$B(z_i,\Lambda s_i) \subseteq B(x,2\Lambda r)$, for each $i\in I$, 
it follows that
\begin{eqnarray*}
    \Ha_{h}(\partial E \cap B(x,2 \Lambda r)) &\geq&
    \sum_{i \in I} \Ha_{h}(\partial E \cap  B(z_i, \Lambda s_i))
    \\ 
    &\stackrel{\eqref{eq:lowerboundary}}{\geq}& \frac{1}{C D^{5}}
		\sum_{i \in I} \frac{\mu(B(z_i, 5\Lambda s_i))}{s_i} \\
		&\geq& \frac{1}{CD^{5}}
		\frac{\mu(S)}{r} \\
		&\geq& \frac{1}{2CD^{7+\log_2(\Lambda)}} 
		\frac{\mu(B(x,2\Lambda r))}{r}
		\frac{\mu(E \cap B(x,r))}{ \mu(B(x,r))} \\
		&\geq& \frac{1}{C_S} 
		\frac{\mu(B(x,2\Lambda r))}{r} \Theta_\mu(E,B(x,r)) 
 \end{eqnarray*}
which implies the relative isoperimetric inequality, as desired.
\end{proof}

\subsection{A Euclidean isoperimetric inequality}


We will prove a ``projected'' isoperimetric inequality for 
Borel sets $E$ relative to 
rectangles in Euclidean spaces $\R^d$. This 
guarantees 
that $\partial_{S_\nseq} E$ has large projections, and when combined with condition (6) from Definition \ref{def:smallproj}, gives lower bounds for $\mathcal{H}_{d-1}(\partial_{S_{\nseq}} E)$. 
To this end 
we formulate an isoperimetric inequality in terms of a direction-wise
Euclidean boundary. 

For $x \in \R^d$, denote by $l_{i,x}$ the line containing $x$ that is parallel 
to the $i$'th coordinate axis.
If $E \subset \R^d$, we also put
$$
\partial_{+,i} E = 
\{\, x \, |\,  x \in \partial_{l_{i,x}}(l_{i,x} \cap E)\, \}.
$$
In other words, the set $\partial_{+,i} E$ consists of those points $x$, where a sequence of points 
exists in the $i$'th coordinate direction within $E$, and outside of $E$, which converges to it. Next, denote by $\pi_i$ the projection of $\R^d$ onto the hyperplane defined by $x_i = 0$.  

We now relate the density of sets with respect to boxes with the size of the projections of their boundaries.  The following lemma is likely classical, but we include its proof for completeness.

\begin{lemma}\label{lem:projection} 
Let $Q= \prod_{i=1}^d (a_i, b_i)$ be a rectangle, and $E \subset \R^d$ a Borel set. Then,
$$
\Theta_\lambda(E,Q) \leq 
d\sum_{i=1}^d \frac{\Ha_{d-1}(\pi_i(\partial_{+,i} E \cap Q))}{\Ha_{d-1}(\pi_i(Q))}
$$

\end{lemma}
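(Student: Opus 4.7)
My plan is to combine a one-dimensional bisection observation with the Loomis--Whitney inequality, applied to a full-measure subset of $E$. By replacing $E$ with $Q\setminus E$ if necessary, I may assume $\rho := \lambda(E\cap Q)/\lambda(Q)\leq 1/2$, so that $\Theta_\lambda(E,Q)=\rho$. For $i\in\{1,\dots,d\}$ and $y\in\pi_i(Q)$, let $L_y^i := \pi_i^{-1}(\{y\})\cap Q$ denote the fibre segment, and define the fibre density $g_i(y) := \lambda_1(L_y^i\cap E)/(b_i-a_i)$. Partition $\pi_i(Q)$ into $W_i := \{g_i=0\}$, $F_i := \{g_i=1\}$, and $S_i := \{0<g_i<1\}$, and set $f_i := \lambda_{d-1}(F_i)/\lambda_{d-1}(\pi_i(Q))$ and $\sigma_i := \lambda_{d-1}(S_i)/\lambda_{d-1}(\pi_i(Q))$. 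Fubini's theorem gives $\rho\leq f_i+\sigma_i$ for every $i$.

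The one-dimensional observation is: for $y\in S_i$, the set $L_y^i\cap E$ is a proper subset of $L_y^i$ with both positive measure and positive complementary measure, so it cannot be clopen in the connected segment $L_y^i$; hence the topological boundary $\partial_{l_{i,x}}(l_{i,x}\cap E)$ meets $L_y^i$ for any $x$ lifting $y$. This supplies a point of $\partial_{+,i}E\cap Q$ projecting to $y$, so $S_i\subseteq\pi_i(\partial_{+,i}E\cap Q)$ and therefore $\sigma_i\leq\Ha_{d-1}(\pi_i(\partial_{+,i}E\cap Q))/\Ha_{d-1}(\pi_i(Q))$. It thus suffices to prove $\rho\leq d\sum_i\sigma_i$.

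Trim $E$ to $\tilde E := E\cap\bigcap_i\pi_i^{-1}(\{g_i>0\})$. By Fubini $\lambda(E\cap\pi_i^{-1}(W_i))=0$, so $\lambda(\tilde E)=\lambda(E)$, yet by construction $\pi_i(\tilde E)\subseteq F_i\cup S_i$. Applying the Loomis--Whitney inequality to the Borel set $\tilde E$ and using $\prod_i\lambda_{d-1}(\pi_i(Q))=\lambda(Q)^{d-1}$ together with $f_i\leq\rho$,
\begin{equation*}
\rho^{d-1} \;\leq\; \prod_{i=1}^d\frac{\lambda_{d-1}(\pi_i(\tilde E))}{\lambda_{d-1}(\pi_i(Q))} \;\leq\; \prod_{i=1}^d(f_i+\sigma_i) \;\leq\; \prod_{i=1}^d(\rho+\sigma_i) \;\leq\; (\rho+\bar\sigma)^d,
\end{equation*}
where $\bar\sigma := d^{-1}\sum_i\sigma_i$ and the last step is AM--GM. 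Rearranging and using $\rho\leq 1/2$ together with $e^x\geq 1+x$ yields
\begin{equation*}
\bar\sigma \;\geq\; \rho(\rho^{-1/d}-1) \;\geq\; \rho(2^{1/d}-1) \;\geq\; \rho\,\frac{\ln 2}{d},
\end{equation*}
so $\sum_i\sigma_i\geq\rho\ln 2$, and hence $\rho\leq(\ln 2)^{-1}\sum_i\sigma_i\leq d\sum_i\sigma_i$ for all $d\geq 2$; the case $d=1$ is immediate since $\sigma_1=1$ whenever $\rho\in(0,1)$.

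The main obstacle is the trimming step: naively applying Loomis--Whitney to $E$ itself could inflate $\lambda_{d-1}(\pi_i(E))$ beyond $f_i+\sigma_i$ (for instance if $E$ meets many $i$-fibres only in null sets), so one must pass to $\tilde E$ to force $\pi_i(\tilde E)\subseteq F_i\cup S_i$ while preserving the measure of $E$.
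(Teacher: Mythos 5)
Your proof is correct, but it takes a genuinely different route from the paper. Both arguments begin by classifying fibres in a coordinate direction as full, empty, or partial, and both use the same topological observation that a partial fibre must meet $\partial_{+,i}E$ (your inclusion $S_i\subseteq \pi_i(\partial_{+,i}E\cap Q)$ is the paper's identification of $P_0$ with $\pi_1(\partial_{+,1}E\cap Q)$). After that the paths diverge: the paper runs an induction on dimension --- if the partial-fibre set $P_0$ is small, then the full and empty sets $I_0,O_0$ are both large, every slice $H_t$ inherits relative density at least $\tfrac{d-1}{d}\lambda_E$, and one applies the $(d-1)$-dimensional statement slice-wise and integrates in $t$ --- whereas you trim $E$ to $\tilde E$ (removing, in each direction, the fibres meeting $E$ in null sets), apply Loomis--Whitney to $\tilde E$, bound each projection by $f_i+\sigma_i$ with $f_i\leq\rho$, and close with AM--GM and the elementary estimate $2^{1/d}-1\geq \tfrac{\ln 2}{d}$. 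You correctly flag the trimming as the essential point, since $\pi_i(E)$ itself need not be contained in $F_i\cup S_i$. Your route outsources the induction to a standard cited inequality and in fact yields the better constant $(\ln 2)^{-1}$ in place of $d$; the paper's argument is self-contained and elementary, needing only Fubini. Two harmless technical remarks: the cleanest way to avoid worrying about measurability of $\pi_i(\tilde E)$ is to use the functional form of Loomis--Whitney with the indicator functions of the measurable sets $F_i\cup S_i\supseteq\pi_i(\tilde E)$ (or simply use outer measure), and, like the paper, you implicitly identify $\Ha_{d-1}$ on hyperplanes with $(d-1)$-dimensional Lebesgue measure, which is the convention in force throughout; also the degenerate case $\rho=0$ is trivially fine.
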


\begin{proof}
The statement is invariant under affine functions of $x_i$, so
we can assume  that $Q = (0,1)^d$. Also, the statement is clear for $d=1$.

Towards a proof by induction, assume that the statement has been proven for dimension $d-1$ and that $d\geq 2$.  Without loss of generality assume
 $$
 \lambda_E=\lambda(E \cap Q)
\leq \frac{1}{2}.
 $$
For $t \in (0,1)$ and $\mathbf{y} \in (0,1)^{d-1}$ define \begin{eqnarray*}
 H_t &\defeq& \{(t,\mathbf{x}) \in (0,1)^{d-1} ~|~ \mathbf{x} \in (0,1)^{d-1}\}
 \\
 l_{\mathbf{y}} &\defeq& \{(s,\mathbf{y}) \in (0,1)^{d-1} ~|~ s \in (0,1)\}
 \end{eqnarray*}
and consider the following images under $\pi_1$:
 \begin{eqnarray*}
 I_0 &
\defeq& \{\mathbf{x} \in (0,1)^{d-1} ~|~ l_{\mathbf{x}} \subset E \cap Q\},
 \\
 O_0 &\defeq& \{\mathbf{x} \in (0,1)^{d-1} ~|~ l_{\mathbf{x}} \cap E \cap Q = \emptyset\},
 \\
 P_0 &\defeq& (0,1)^{d-1} \setminus (I_0 \cup O_0),
\end{eqnarray*}
in which case it is clear that
\begin{eqnarray*}
 P_0 &=& \pi_1(\partial_{+,1} E \cap Q)
 \\
 \pi_1(E \cap Q) &\subset& P_0 \cup I_0
 \\
 \pi_1(Q \setminus E) &\subset& P_0 \cup O_0.
\end{eqnarray*}
If $\Ha_{d-1}(P_0) \geq \frac{1}{d}\lambda_E$ then the statement of the lemma is trivially true, so assume $\Ha_{d-1}(P_0) \leq \frac{1}{d}\lambda_E$ 
which implies that
$$
\lambda_E \leq
\Ha_{d-1}(\pi_1(E \cap Q)) \leq
\Ha_{d-1}(P_0 \cup I_0) \leq
\frac{1}{d}\lambda_E + \Ha_{d-1}(I_0).
$$
Then, $\Ha_{d-1}(I_0) \geq \frac{d-1}{d}\lambda_E$. Similarly, we assumed $\lambda_E \leq \frac{1}{2} \leq \lambda(Q \setminus E)$ so it follows that
$$
\min\{ \Ha_{d-1}(I_0) , \Ha_{d-1}(O_0) \} \geq \frac{d-1}{d}\lambda_E.
$$
Now if $t \in (0,1)$ then $E \cap H_t$ contains a translate of $I_0$, and its complement contains a translate of $O_0$. Then, we get from the $d-1$-dimensional statement for $E \cap H_t$, that
 $$
 \frac{(d-1)\lambda_E}{d} \leq (d-1)\sum_{j=2}^d \Ha_{d-2}(\pi_j(\partial_{+,j} E \cap H_t)),
 $$
 which when integrated over $t$ and using Fubini's theorem gives
 $$
 \lambda_E \leq d\sum_{j=2}^d \Ha_{n-1}(\pi_j(\partial_{+,j } E \cap Q)).
 $$
 This gives the desired inequality.
\end{proof}

\begin{remark} The above gives a fairly simple inductive proof of the isoperimetric inequality in $\R^d$, although with sub-optimal constants.
\end{remark}

\section{Proof of Theorem \ref{thm:mainthm}}\label{sec:proof}

To begin, recall from \S2 that subscripts for balls indicate the space (and hence, the choice of metric) on which those balls are defined.

{
\begin{remark}[Dependence on parameters]
In the 
proof below,
many choices will depend on 
the
parameters
from Definitions \ref{def:sparsecoll} and \ref{def:smallproj}, as well as lemmas from earlier sections.
Two parameters, $\beta$ and $\epsilon_1$, 
will be determined at the end of the proof, as they depend on many intermediate parameters. However, none of the other parameters will depend on the choice of 
$\beta$ and $\epsilon_1$.
\end{remark}
}

\begin{proof}[Proof of Theorem \ref{thm:mainthm}]
Definitions \ref{def:sparsecoll} and \ref{def:smallproj} are scale-invariant, so without loss of generality we may 
assume that 
$\diam(\Omega)=1$. By a coordinate change we can take the maps $\pi_i$ to be orthogonal projections onto hyperplanes given by $x_i=0$, for $i=1,\dots, d$.

Before fixing other parameters, we first claim that $S_{\nseq}$ is connected. Indeed, by Theorem \ref{thm:cutout} the domains obtained by removing finitely many co-uniform sets from $\Omega$, 
{
i.e.
\begin{align} \label{eq:presponge}
S_{j,\nseq} := 
\Omega \setminus 
\Big(
\bigcup_{k=1}^j \bigcup_{R \in \mathcal{R}_{\nseq,k}} R
\Big)
\end{align}
are a nested sequence of compact connected sets,
so the intersection
$S_{\nseq} = \bigcap_{j=1}^\infty S_{j,\nseq}$
is also connected.
}

It follows from Lemma \ref{lem:volobstacles} that $S_{\nseq}$ is $d$-Ahlfors regular with 
constant  $C_{AR}\geq \omega_d$ depending on all parameters $A,L,\delta,d,\nseq$.  Thus it is also $D$-
doubling with 
fixed constant
$D=2^dC_{AR}^d$.


We now proceed in four steps, indicating reductions and strategies as needed.
\vspace{.3cm}

\noindent \textbf{Step I:\ Reduction to a small sum:} 
Since
$S_{\nseq}$ is 
$D$-doubling, 
Theorem \ref{thm:cutout} implies 
that for any $k \in \N$ and for any fixed obstacle $R \in \mathcal{R}_{\nseq, k}$ the set 
$\Omega \setminus R$ is 
{
$A'$-uniform%
} 
for some $A' \geq A$. 
{
}

{
With 
{
$\epsilon_1 \in (0,\frac{1}{2})$%
}
to be determined later, 
choose $N$ so that 
$$
\sum_{k=N}^\infty \frac{1}{n_k^{d-1}}<\epsilon_1.
$$

Now for the subsequence $\nseq' = (n_{N+i})_{i=1}^\infty$,
%
construct $S_{\nseq'}$ analogously as 
$S_{\nseq}$ but with 
$\mathcal{R}_{\nseq,k+N}$ in place of $\mathcal{R}_{\nseq,k}$
for each $k \in \N$, and possibly with $\Omega \setminus \rho$ in place of $\Omega$. 
Clearly 
$S_\nseq \subset S_{\nseq'} \subset \R^d$, 
so the $d$-Ahlfors regularity of $S_\nseq$ implies the $d$-Ahlfors regularity of $S_{\nseq'}$, 
with the same 
constant $C_{AR}$ or smaller, and hence the same fixed doubling constant $D$ as above (or smaller).

So at this level $N$,
}if $r< \delta s_{N-1}/2$ then by condition (5) of Definition \ref{def:sparsecoll} the ball $B(x,r)$ can intersect only one set $\rho$ in the collection $\{\Omega^c\} \cup \bigcup_{k=1}^{N-1} 
\mathcal{R}_{\nseq,k}$.
 so 
it holds that
\begin{align} \label{eq:equalityballs}
B_{S_{\nseq}}(x,r)=S_{\nseq} \cap B(x,r) = S_{\nseq'} \cap B(x,r)=B_{S_{\nseq'}}(x,r)
\end{align}
It suffices to prove a Poincar\'e inequality for $S_{\nseq'}$,
say 
with constants $C_{PI},\Lambda_{PI}$.  To see why,
%
by applying \eqref{eq:equalityballs} 
and considering only radii $r \in (0,\delta s_{N-1}/(2\Lambda_{PI}))$, 
this gives a local Poincar\'e inequality in $S_{\nseq}$ 
as both sides of inequality \eqref{def:PI} coincide in $S_{\nseq}$ and $S_{\nseq'}$. 
The set $S_{\nseq}$ is bounded, $D$-doubling and connected, and so, by Lemma \ref{lem:local}, 
a local Poincar\'e inequality further yields 
a global Poincar\'e inequality. 

We now simplify notation by only considering $S_{\nseq'}$ and dropping the primes, that is by increasing $A$ we assume now that $A=A'$ 
and that $S_{\nseq}=S_{\nseq'}$ is $d$-Ahlfors regular with constant $C_{AR}$. 
We will also use a simplified notation for balls and for relative boundaries, that is:\
$B_\nseq(x,r) := B_{S_\nseq}(x,r)$
and
$\partial_\nseq E := \partial_{S_\nseq}E$. Further, for each $i \in \N$ we re-index $n_{i+N-1}$ as $n_i$.
If
necessary, we replace $\Omega$ by $\Omega \setminus \rho$ as before. 

\vspace{.3cm}

\noindent \textbf{
Step II:\ The 
strategy for small sums
$
\sum_{k=1}^\infty \frac{1}{n_k^{d-1}}<\epsilon_1
$: 
} 
By Theorems \ref{thm:largeenough} 
{
and \ref{thm:11PI}, %
}
it suffices to prove that there are constants
{
$C,\Lambda$%
}
so that for sets $E \subset S_{\nseq}$ and balls $B_\nseq(x,r)$, 
if
$x \in \Omega$ 
satisfies
$$\Theta_\mu(E,B_\nseq(x,r)) \geq \frac{1}{D^3}$$
then it would follow that
\begin{equation}\label{eq:goal}
\frac{Cr \mathcal{H}_h(\partial_\nseq E \cap B_\nseq(x,\Lambda r))}{\mu(B_\nseq(x,\Lambda r))} \geq \Theta_\mu(E,B_{\nseq}(x,r)).
\end{equation}
It suffices to take $r \in (0,\diam(S_{\nseq})]$.  We fix such a ball, as well as a set $E$ for the remainder.
By Remark \ref{rmk:comparabiliy} it suffices to prove this estimate with $\mathcal{H}_{d-1}$ replacing $\mathcal{H}_h$, 
in which case the result follows with constant $CC_{AR}$ in place of $C$.

The proof will proceed by finding a ``good'' ball 
in two steps.
We construct, as necessary, balls $B_\nseq(x_1,r_1)$ and $B(x_2,r_2)$, with $x_i\in \Omega$, so that $\Theta_\mu(E,B_\nseq(x_1,r_1)) \geq \eta_1$ and
$\Theta_\lambda(E,B(x_2,r_2)) \geq \eta_2$. In order to pass to Euclidean bounds and apply Lemma \ref{lem:projection}, the second ball in this process will be a Euclidean ball.
{
See the end of Step III below}
for the precise choices of $\eta_i$ which are quantitative in the previous parameters. 
Putting $x=x_0$ and $r=r_0$, 
at each step we will also ensure that 
$d(x_i,x_{i-1}) \leq S_i r_{i-1}$, and 
$r_i \in [\frac{1}{S_i} r_{i-1},r_{i-1}]$ 
for some $S_1,S_2 \geq 0$ which 
depend quantitatively on the parameters.

As a result, we show that $x_2 \in \Omega$ satisfies both $d(x_2,x) \leq Sr$ and $r \geq r_2 \geq \frac{1}{S}r$
for some universal 
constant
$S = S_1S_2$,
as well as
\begin{equation}\label{eq:reduction}
\mathcal{H}_{d-1}(\partial_\nseq E \cap 
B(x_2,\sqrt{d} r_2)) \geq \Delta r_2^{d-1}
\end{equation}
for some universal $\Delta$ depending on all of the constants in the statement.


Moreover 
by doubling, the fact that $\Theta_\mu(E,B_\nseq(x,r)) \geq \frac{1}{D^3}$, and the choice of the good balls, 
we can deduce estimate \eqref{eq:goal} from proving inequality \eqref{eq:reduction}. The constants $C,\Lambda$ come directly from doubling and Ahlfors regularity.

With the fixed parameters $A$, $D$, $\delta$, and $L$ from before, let $A'=A'(A,D,\delta,L)$ be the uniformity constant from Theorem \ref{thm:cutout}. This coincides with an upper bound for the uniformity constant of $\Omega \setminus R$ for any fixed obstacle $R$. We note for clarity, that this instance of $A'$ is different from the previous $A'$ -- indeed, due to the abbreviation of notation, the new uniformity constant would arise from the removal of up to two obstacles from the domain we started from.

Since $S_{\nseq} \subset \Omega$, and $S_{\nseq} \subset \Omega \setminus R$ for any obstacle $R$, then both $\Omega$ and $\Omega \setminus R$ are Ahlfors regular with constants which are the same or better than $C_{AR}$. Thus, they are also $D$-doubling.

\vspace{.2cm}
\noindent \textbf{
Step III:\ Choosing a good ball:} 
%
 Put $S_1 = \frac{4\sqrt{d}}{\delta}$.
If $r\leq \frac{\delta}{16A'\sqrt{d}}$ then 
it suffice to choose
$x_1=x$ and $r_1=r$ and $\eta_1 = \frac{1}{D^3}$.
Otherwise $r \in (\frac{\delta}{{16 A'}\sqrt{d}}, \diam(S_{\nseq})]$, so 
choose first
$r_1 = \frac{\delta}{16 A'\sqrt{d}}$.
By Corollary \ref{cor:smallerscale} there exist $L_1> 0$ and $x_1 := y \in S_\nseq$, depending quantitatively on the parameters, so that 
\begin{equation}\label{eq:r_1lowerbound}
\Theta_\mu(E,B_\nseq(x_1,r_1)) \geq \frac{1}{L_1 D^3},
\end{equation}
in which case choose instead
$\eta_1=(L_1D^3)^{-1}$. 
Note 
in the first case
that $d(x_1,x)=0$
and in the second case that
$$
d(x_1,x) \leq \diam(S_{\nseq}) \leq \frac{4\sqrt{d}\diam(S_{\nseq})}{\delta} r 
=
S_1r.
$$
In either case, the distance is bounded by 
$S_1r$ and  we have $S_1^{-1}r\leq r_1\leq r$.
Now choose $k \in \N$ so that 
$$
\frac{\delta}{
{
16A'%
}
\sqrt{d}} s_{k} \leq 
r_1 \leq 
\frac{\delta }{
{
16A'%
}
\sqrt{d}}s_{k-1}.
$$
If $B(x_1,{
8A'%
}
\sqrt{d}r_1)$ does not intersect 
$\Omega^c$ or
any obstacles $R$ in $\mathcal{R}_{\nseq,l}$ for $l \leq k$, 
then choose 
$x_2=x_1$ and $r_2=r_1$.
Moreover, 
with $\beta > 0$ to be determined later, if
instead
$B(x_1,{
8A'%
}
\sqrt{d} r_1)$  does not intersect $\Omega^c$ but does intersect an obstacle and if the largest such obstacle $R$ in $\mathcal{R}_{\nseq,l}$ for $l \leq k$ satisfies $\diam(R) \leq \beta r_1$, then we also 
choose $x_2=x_1$ and $r_2=r_1$.
In both cases, putting $\eta_2=\eta_1$  yields 
\begin{equation} \label{eq:goodballdensity}
\Theta_{
\lambda}(E,B(x_2,r_2)) \geq 
{
\omega_d^{-1}C_{AR}^{-1}}\eta_1.
\end{equation}
Here, we use \eqref{eq:r_1lowerbound} and
 the Ahlfors regularity of $S_{\nseq}$, i.e.
that $\mu(B_\nseq(x_2,r_2))\geq 
\frac{1}{\omega_d C_{AR}} \lambda(B(x_2,r_2))$.

If neither of these two cases occurs, then 
$B(x_1,
{
8A'%
}
\sqrt{d}r_1)$ intersects 
either
$\Omega^c$, or 
some obstacle $R$ in $\mathcal{R}_{\nseq,l}$ for $l \leq k$  with $\diam(R) \geq \beta r_1$. There can be at most one such obstacle, by condition (4) in Definition \ref{def:sparsecoll} and 
by the above 
choice of $k$. 

If there is no such obstacle, then define $\Omega'=\Omega$, and if there is one, then define $\Omega'=\Omega \setminus R$; 
in either case, $\Omega'$
is $A'$-uniform 
with $A'$ fixed as above. As noted at the end of Step II, $\Omega'$ is also $C_{AR}$-Ahlfors regular and $D$-doubling.  Denote by $\mu'$ the restricted Lebesgue measure on $\Omega'$ for which we have $$
\Theta_{\mu'}(E,B(x_1,8A'\sqrt{d}r_1)) \geq (8A'\sqrt{d})^{-d} C_{AR}^{-2} \eta_1.
$$
Here, we used estimate \eqref{eq:r_1lowerbound} and the Ahlfors regularity of $\Omega'$.
For this domain,
{
applying
Lemma \ref{lem:containedballs} 
with $ (8A'\sqrt{d})^{-d} C_{AR}^{-2}\eta_1$ for $\eta$,%
}
there 
exist
$\sigma = \sigma(D,A',\eta_1)$ and 
$x_2 = y \in B(x_1,4A' r_1)\cap \Omega'$ so that
$B(x_2,\sigma r_1) \subset \Omega'\subset \Omega$ 
as well as
$$
\Theta_{\lambda}(E,B(x_2,\frac{\sigma}{4\sqrt{d}} r_1))=\Theta_{\mu'}(E,B(x_2,\frac{\sigma}{4\sqrt{d}} r_1)) \geq 
\frac{1}{2 (8A'\sqrt{d})^{d} C_{AR}^{2}D^2}\eta_1.
$$
Let $r_2 = 
\frac{\sigma}{4\sqrt{d}} r_1
$ and put $\eta_2 = \frac{1}{2 (8A'\sqrt{d})^{d} C_{AR}^{2}D^2}\eta_1$,
{
from which Equation \eqref{eq:goodballdensity} also follows;%
}
it now suffices to take $S_2 := \max\{4A',\frac{4\sqrt{d}}{\sigma}\}$.
With this choice of ball, we will have
$$
\frac{\sigma \delta}{
{
64A'
}
d} s_{k} \leq 
r_2 \leq 
\frac{\delta}{
{
16A'%
}
\sqrt{d}} s_{k-1}.
$$

\vspace{.2cm}
\noindent \textbf{Step IV:\ An isoperimetric estimate for 
a good ball:} 
By construction, 
$B(x_2, 2
{
A'%
}
\sqrt{d} r_2)$ 
does not intersect $\Omega^c$. 
{
From our choice of $r_2$ above,%
}
any obstacle $R$ that intersects $B(x_2, 2
{
A'%
}
\sqrt{d} r_2)$ 
{
will also intersect $B(x_1,8A'\sqrt{d}r_1)$ and therefore satisfies}
either $R \in \mathcal{R}_{\nseq,l}$ for some $l>k$ or 
$R \in \mathcal{R}_{\nseq,k}$ with
\begin{equation} \label{eq:obstaclediameter}
\diam(R) \leq 
\beta r_1 \leq 
\frac{4\sqrt{d}}{\sigma}
\beta 
r_2%
.
\end{equation}
If there is such an obstacle, 
then call it $R_0$;
otherwise let $R_0 = \emptyset$.

Now, let $Q$ be the cube centered at $x_2$ of side length $2r_2$ 
{
and with faces parallel to the coordinate planes, so $Q$ contains%
}
the ball $B(x_2,r_2)$ 
and
$\lambda(Q)\leq 2^d \sqrt{d}^d 
\omega_d^{-1}
\lambda(B(x_2,r_2)).$

Then for $\eta_3=  (2^d \omega_d \sqrt{d}^d )^{-1} \eta_2$, 
the above estimate with Equation \eqref{eq:goodballdensity} implies
$$\Theta_{\lambda}(E,Q) \geq 
(\omega_d \sqrt{d}^d 2^d)^{-1} \Theta_{\lambda}(E,B_\nseq(x_2,r_2)) \geq \eta_3.$$

By Lemma \ref{lem:projection}, there is an $i$ so that
\begin{equation}\label{eq:proji}
\mathcal{H}_{d-1}(\pi_i(Q \cap \partial_{+,i} E)) \geq \frac{(2r_2)^{d-1} \eta_3}{d}.
\end{equation}
Fix a choice of such an index $i$.

Let $\mathcal{S}=\pi_i( Q \cap \bigcup_{l} \bigcup_{R \in \mathcal{R}_{\nseq,l}} R)$, 
i.e.\
the ``shadow'' of all of the obstacles intersecting $Q$ and removed from $\Omega$. Consider the portion of the boundary not shadowed by obstacles:
$$
\partial^iE := Q \cap \partial_{+,i} E \setminus \pi_i^{-1}(\mathcal{S}).
$$

If $z \in \partial^iE$, then since $z \in Q \cap \partial_{+,i} E$, there is a sequence of points $z^E_{j} \in E \cap l_{i,z}$ ($j\in \N$) and and a sequence in its complement $z^{E^c}_j \in E^c \cap I_{i,z}$ converging to $z$. These sequences lie in $S_{\nseq},$ since they are not shadowed by obstacles in $Q$. Indeed, $z \in \partial_\nseq E$ and we have shown
\begin{equation}\label{eq:ithcoordbounday}
\partial^iE \subset \partial_\nseq E \cap Q.
\end{equation}

It thus suffices to prove that $\mathcal{H}_{d-1}(\partial^iE)$ is greater than a multiple of $r_2^{d-1}.$ We will do this by estimating $\mathcal{H}_{d-1}(\pi_i(\partial^iE))$ from below. To do this we note
\begin{equation}\label{eq:inclusionith}
\pi_i(\partial^iE) = \pi_i(Q \cap \partial_{+,i} E) \setminus \mathcal{S}.
\end{equation}

Now, $\mathcal{S}$ 
consists
of two parts: $\pi_i(\bigcup_{l > k}\bigcup_{R \in \mathcal{R}_{\nseq, 
l
}} \pi_i(R \cap 
Q))$ and  $\pi_i(R_0
\cap Q
)$. Since $R_0$ is either empty 
or satisfies Equation \eqref{eq:obstaclediameter}, it follows that
\begin{equation}\label{eq:R0proj}
\mathcal{H}_{d-1}(\pi_i(R_0)) \leq 
\beta^{d-1} \Big( \frac{4\sqrt{d}}{\sigma} \Big)^{d-1} r_2^{d-1}.
\end{equation}
Let $\rho = \frac{
{
64A'
}
d}{\sigma \delta}r_2 \geq s_k$, so $Q \subset B(x_2,\rho)$. For the other part, we apply condition (6) from Definition \ref{def:sparsecoll}:
\begin{eqnarray}\label{eq:shadowobs} 
\hskip1cm \mathcal{H}_{d-1} \Big(\pi_i\left(\bigcup_{l > k}\bigcup_{R \in \mathcal{R}_{\nseq, l}} \pi_i(R \cap B(x_2,\rho))\right)\Big) \leq 
\sum_{l=k+1}^\infty \frac{L \rho^{d-1}}{n_l^{d-1}} \leq
\frac{\epsilon_1 L(
{
64A'
}
d)^{d-1}}{\sigma^{d-1} \delta^{d-1}} r_2^{d-1}.
\end{eqnarray}

Then the sum of estimates \eqref{eq:R0proj} and \eqref{eq:shadowobs} gives
\begin{eqnarray}\label{eq:summary}
\Ha_{d-1}(\mathcal{S}) &\leq& \mathcal{H}_{d-1}(\pi_i(R_0)) + \mathcal{H}_{d-1} \left(\pi_i\left(\bigcup_{l > k}\bigcup_{R \in \mathcal{R}_{\nseq, l}} \pi_i(R \cap B(x_2,\rho))\right)\right) \nonumber \\
&\leq& \left( \frac{\epsilon_1 L(
{
64A'
}
d)^{d-1}}{\sigma^{d-1} \delta^{d-1}} + \beta^{d-1}
{
\Big(
\frac{4\sqrt{d}}{\sigma}
\Big)^{d-1}
}
\right) r_2^{d-1} .
\end{eqnarray}

Now, we choose $\epsilon_1 = \frac{\sigma^{d-1} \delta^{d-1} \eta_3 }{ 4 d  L(
{
64A'
}
d)^{d-1} }$ and 
$\beta = 
{
\frac{\sigma}{4\sqrt{d}}
}
(\frac{\eta_3}{4 d})^{\frac{1}{d-1}}$. These choices, together with estimates \eqref{eq:summary} and \eqref{eq:proji} together with the equality \eqref{eq:inclusionith} and inclusion \eqref{eq:ithcoordbounday} give

\begin{eqnarray}
\Ha_{d-1}(\partial_\nseq E \cap B(x_2,\sqrt{d} r_2)) ~\geq~
\Ha_{d-1}(\partial_\nseq E \cap Q)  &\geq& 
\Ha_{d-1}(\partial^i E) \nonumber \\
&\geq& 
\Ha_{d-1}(\pi_i(\partial^i E)) \nonumber \\
&\stackrel{\eqref{eq:inclusionith}}{\geq}& 
\Ha_{d-1}(\pi_i(Q \cap \partial_{+,i} E)) - \Ha_{d-1}(\mathcal{S}) \nonumber \\
&\stackrel{\tiny \begin{array}{c}
\eqref{eq:summary} \\
\eqref{eq:proji}
\end{array}}{\geq}&
\frac{(2r_2)^{d-1}\eta_3}{d} - \frac{r_2^{d-1}\eta_3}{2d} \geq \frac{\eta_3}{2d} r_2^{d-1}.
\end{eqnarray}

This is estimate \eqref{eq:reduction}, from which \eqref{eq:goal} follows after applying doubling, Ahlfors regularity as well as the estimates for $d(x_2,x) \leq 
Sr$ and $r_2 \geq \frac{1}{S} r$ obtained by following the previous steps. This concludes the proof of the isoperimetric inequality.
\end{proof}

\appendix
\section{Explicit examples}\label{app:explicit}

Here, we give an explicit application of Theorem \ref{thm:mainthm}. This is a generalization of a construction of Mackay, Tyson, and Wildrick \cite{mackaytysonwildrick} to higher dimensions. 

Let ${\bf n} = (n_i)_{i=1}^\infty$ be a 
sequence of odd positive integers with $n_i \geq 3$. 
Fix a dimension $d \geq 2$ and consider the following iterative construction:

\begin{enumerate}
\item
At the first stage,
put
$S_{0,\nseq}=[0,1]^d$ and 
$T_{0,\nseq}^1 = [0,1]^d$ and 
$\mT_{0,\nseq}=\{T_{0,\nseq}^1\}$.

\item
Assuming that we have defined
$S_{k,\nseq}, T_{k,\nseq}^j$,
$\mT_{k,\nseq}$ at the $k$th stage, 
for $k \in \N$,

\begin{itemize}
\item
Subdivide each $T \in \mT_{k,\nseq}$
into $(n_{k+1})^d$ equal subcubes and exclude the central one. 
\item
Index the remaining subcubes in any fashion as $T_{k+1, \nseq}^j$,
and let $\mT_{k+1, \nseq} = \{T_{k+1,\nseq}^j\}$ be the 
collection of all the 
remaining
cubes.
\end{itemize}
The {\sc $k+1$'th order pre-sponge} 
is defined, consistent with \eqref{eq:presponge}, as the set 
$$
S_{k+1, \nseq} = \bigcup_{T \in \mT_{k+1, \nseq}} T.
$$

\item
Define $\mR_{\nseq, k}$ to be the set of central $1/n_{k+1}$ subcubes that were removed from each $T \in \mT_{k,\nseq}$ which are removed at the $k$'th stage 
and put
$\nmR_{\nseq, k}=\bigcup_{l=1}^k \mR_{\nseq,l}$.
Further we note that for $k \in \N$,
$$
s_k = \prod_{i=1}^k \frac{1}{n_i}
$$
is the side length of each cube $T \in \mathcal{T}_{k,\nseq}$. 
(For consistency, let $s_0=1$.)
\end{enumerate}

The {\sc Sierpi\'nski-sponge associated to the sequence $\nseq$} is then defined as
\begin{equation}
S_{\nseq} = \bigcap_{k \geq 0} S_{k,\nseq}. \label{eq:sierdef}
\end{equation}
When $d=2$, we often call these Sierpi\'nski-carpets due to the fact when
 $\nseq=(3,3,3\ldots)$ the construction yields the usual ``middle-thirds'' Sierpi\'nski carpet. Indeed, in the plane, all $S_{\nseq}$ are homeomorphic to this space.

The main results by Mackay, Tyson, and Wildrick \cite[Theorem 1.5--1.6]{mackaytysonwildrick}, for dimension $d=2$, characterizes when a Sierpi\'nski carpets with the restricted measure satisfies a Poincar\'e inequality. Specifically, the subset satisfies a $1$-Poincar\'e inequality if and only if  $\mathbf{n}^{-1}=(\frac{1}{n_i}) \in \ell^1$. Their result also states that the carpet satisfies a $p$-Poincar\'e inequality for some (or any) $p>1$ if and only if $\mathbf{n}^{-2}=(\frac{1}{n_i^2}) \in \ell^2$. 
The $p>1$ regime behaves quite differently, and the authors investigated this in more detail in a separate paper \cite{bigpaper}. In that paper, there also appears a version of the following theorem for $p>1$; see \cite[Theorem 1.6]{bigpaper}. These together fully extend the results \cite{mackaytysonwildrick} to all dimension, as well as to obstacles with different geometries.

The following 
result is a
higher dimensional analogue of 
the Mackay-Tyson-Wildrick Theorem, for the $p=1$ case.

\begin{theorem}\label{thm:ndimSiercarpet} Let ${\nseq} = (n_i)$ be a sequence of odd integers with $n_i \geq 3$, and let $d \geq 2$.  The space $(S_{\nseq}, |\cdot|, \lambda)$ satisfies a $1$-Poincar\'e inequality if and only if 
\begin{align} \label{eq:d-1summable}
\sum_{i=1}^\infty \frac{1}{n_i^{d-1}} < \infty.
\end{align}
\end{theorem}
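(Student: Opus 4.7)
The plan is to prove the two directions separately: sufficiency via Theorem \ref{thm:mainthm}, and necessity via the Korte--Lahti characterization Theorem \ref{thm:11PI} applied to a well-chosen test pair.

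For the sufficiency direction, I would verify that the data $\Omega = [0,1]^d$ together with the collections $\mathcal{R}_{\nseq,k}$ of central subcubes fit Definitions \ref{def:sparsecoll} and \ref{def:smallproj}. Conditions (1)--(5) are geometric: coordinate cubes are $A$-co-uniform and $[0,1]^d$ is $A$-uniform for universal $A$; the diameter $\diam(R) = \sqrt{d}\, s_k$ gives $L = \sqrt{d}$; and the oddness of each $n_k \geq 3$ together with the grid structure forces the central subcube to lie at distance at least $(s_{k-1}-s_k)/2 \geq s_{k-1}/3$ from $\partial T$ for its parent $T \in \mathcal{T}_{k-1,\nseq}$ and from any other obstacle at an earlier or equal scale, so $\delta = 1/3$ suffices. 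The crux is the small-projections condition (6). The key observation is that projecting level-$k$ obstacles onto a coordinate hyperplane collapses everything lying in the same $x_i$-column of level-$(k-1)$ cubes onto a single $(d-1)$-cube of side $s_k$; thus within $B(x,r)$ for $r \geq s_{k-1}$ there are at most $(Cr/s_{k-1})^{d-1}$ distinct projected images, giving total $(d-1)$-measure at most $C r^{d-1}/n_k^{d-1}$. With all conditions verified, Theorem \ref{thm:mainthm} applies directly under the summability hypothesis $\sum 1/n_k^{d-1} < \infty$.

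For necessity I would argue by contradiction: assume the 1-Poincar\'e inequality holds but $\sum 1/n_k^{d-1} = \infty$, and derive a failure of the relative isoperimetric inequality from Theorem \ref{thm:11PI}. The test pair is $E \defeq S_\nseq \cap \{x_1 \leq 1/2\}$ together with a ball $B$ centered at $(1/2, \ldots, 1/2)$ of radius $\sqrt{d}$. Since each $n_k$ is odd, the sponge $S_\nseq$ is invariant under the reflection $x_1 \mapsto 1 - x_1$, so $\Theta_\mu(E, B) = 1/2$, while $\partial_{S_\nseq} E \subset \{x_1 = 1/2\} \cap S_\nseq$. By an inductive computation that exploits the same symmetry --- namely, the hyperplane $\{x_1 = 1/2\}$ always meets the central $x_1$-subinterval of any surviving cube it intersects, so at each level the slice of the pre-sponge loses exactly a fraction $1/n_k^{d-1}$ of its area --- one obtains $\Ha_{d-1}(\{x_1 = 1/2\} \cap S_{k,\nseq}) = \prod_{i=1}^k (1 - 1/n_i^{d-1})$. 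Passing to the limit via downward continuity of Lebesgue measure on the nested intersection yields $\Ha_{d-1}(\{x_1 = 1/2\} \cap S_\nseq) = \prod_{k=1}^\infty (1 - 1/n_k^{d-1})$, which vanishes precisely when $\sum 1/n_k^{d-1} = \infty$. Combined with the $d$-Ahlfors regularity of $S_\nseq$ (Lemma \ref{lem:volobstacles}) and the comparability $\Ha_h \asymp \Ha_{d-1}$ of Remark \ref{rmk:comparabiliy}, this forces $\Theta_\mu(E,B) = 0$, a contradiction.

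The main technical obstacle is condition (6): the ``column stacking'' argument for the projection bound must be turned into a rigorous union bound that works uniformly over all choices of center $x$, scale $k$, and projection direction $\pi_i$, and in particular is independent of the combinatorial pattern of which ancestors have survived. The slice recursion on the necessity side is by contrast a fairly clean product computation, once one notes that the reflection symmetry of $S_\nseq$ pins the hyperplane $\{x_1 = 1/2\}$ inside the central $x_1$-subinterval of every surviving cube at every scale.
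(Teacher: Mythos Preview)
Your sufficiency direction matches the paper's proof exactly: both verify conditions (1)--(6) of Definitions \ref{def:sparsecoll}--\ref{def:smallproj} with $\Omega=[0,1]^d$, $\delta=\tfrac13$, $L=O(\sqrt{d})$, and then invoke Theorem \ref{thm:mainthm}. Your column-stacking reading of condition (6) is the same observation the paper uses (each $(d-1)$-grid cell of side $s_{k-1}$ in the projection carries at most one projected level-$k$ obstacle).

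For necessity, the paper's written proof actually omits this direction entirely --- it only verifies (1)--(6) and appeals to Theorem \ref{thm:mainthm}, tacitly relying on \cite[Proposition 4.1]{mackaytysonwildrick} (which is stated only for $d=2$) for the converse. Your half-space test set $E=S_\nseq\cap\{x_1\le\tfrac12\}$ and the slice recursion $\Ha_{d-1}(\{x_1=\tfrac12\}\cap S_{k,\nseq})=\prod_{i\le k}(1-n_i^{1-d})$ give a clean, self-contained argument that works in every dimension, so this is a genuine improvement over what the paper writes down.

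There is one circularity to repair. You invoke Lemma \ref{lem:volobstacles} for Ahlfors regularity, but that lemma is stated ``under the hypotheses of Theorem \ref{thm:mainthm}'', which include the summability condition $\sum n_k^{1-d}<\infty$ you are contradicting. The fix is easy but must be said: inspect the proof of Lemma \ref{lem:volobstacles} and observe it only uses the weaker condition $\sum n_k^{-d}<\infty$. Then split into cases. If $\sum n_k^{-d}=\infty$ as well, then $\lambda(S_\nseq)=0$ and the Poincar\'e inequality with measure $\lambda|_{S_\nseq}$ is vacuous (the standing assumption $\mathrm{supp}(\mu)=X$ with $\mu(B)>0$ fails). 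If $\sum n_k^{-d}<\infty$ but $\sum n_k^{1-d}=\infty$, then $S_\nseq$ is $d$-Ahlfors regular, Remark \ref{rmk:comparabiliy} gives $\Ha_h\asymp\Ha_{d-1}$, and your slice computation yields $\Ha_h(\partial_{S_\nseq}E)=0$ while $\Theta_\mu(E,B)=\tfrac12$, violating \eqref{eq:isoperim} for any constants and hence, by Theorem \ref{thm:11PI}, the $1$-Poincar\'e inequality. (Your phrase ``this forces $\Theta_\mu(E,B)=0$'' should be reworded to say the right-hand side of \eqref{eq:isoperim} vanishes.)
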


\begin{proof} We check the various conditions of having a sparse collection of co-uniform domains in $[0,1]^d$ with small projections and then the claim follows from Theorem \ref{thm:mainthm}. These can be directly verified with the choicses $L=4(\sqrt{d}r)^{d-1},\delta=\frac{1}{3}$, $\Omega=T_0$ and $A=\frac{1}{6d}.$

\vspace{.2cm}

\noindent \textbf{Conditions (1), (2):} The uniformity and co-uniformity of squares is easy. For example, for the unit cube $T_0$, if we take $x,y \in T_0=\Omega$ with $d(x,y)=s$, then we can form $\gamma$ by first choosing  radial paths towards the center of the square $c$ from $x$ and $y$ of length $\min\{s/2,d(x,c)\}$ and $ \min\{s/2,d(y,c)\}$, respectively, and then concatenating by a straight line segment. This gives a path of length at most $6s$, which is $\frac{1}{6d}$-uniform.

\vspace{.2cm}

\noindent \textbf{Condition (3):} Each $R \in \mathcal{R}_{\nseq,k}$ has side length $s_k$, and so diameter at most $\sqrt{d} s_k$.

\vspace{.2cm}

\noindent \textbf{Conditions (4), (5):} If $R \in \mathcal{R}_{\nseq,k}$, then $R$ is a central $1/n_k$ cube of some $T \in \mathcal{T}_{\nseq,k-1}$. Thus the distance to the boundary, or any other higher level obstacle, is at least the distance to the boundary of $T$, that is at least $\frac{1}{3}s_{k-1}$, since $n_{k} \geq 3.$ 

\vspace{.2cm}

\noindent \textbf{Condition (6):} Set $K=1$. Include $\pi_i(B(x,r) \cap S_{0,\nseq})$ into a $d-1$ dimensional cube $Q_0$ of side length at most $4 \sqrt{d} r$, which is a union of $d-1$ cubes $\mathcal{Q}=\{Q_i : i=1,\dots, N\}$, for some $N$, in the grid of side length $s_{k-1}$. Each cube $Q_i$ of side length $s_{k-1}$ will include at most one projected cube $\pi_i(R)$ for some $R \in \mathcal{R}_{\nseq,k}$. Such a cube is centered and accounts for at most $\frac{1}{n_k^{d-1}}$ of the volume $\lambda(Q_i)$. Thus, $\lambda\left(\pi_i\left(\bigcup_{R\in \mathcal{R}_{\nseq,k}} \cap Q_i\right)\right) \leq \frac{1}{n_k^{d-1}} \lambda(Q_i)$. The entire volume of the cube $Q_0$ is at most $(4\sqrt{d}r)^{d-1}$. Summing over all $i=1,\dots, N$ gives the claim with $L= (4\sqrt{d}r)^{d-1}$.

\end{proof}

\begin{remark}\label{rmk:triangles}One can replace the square lattice with a triangular lattice, and perform the removal procedure on a central triangle instead of a central square. The only crucial property one must ensure is that the central triangle does not intersect the boundary of its parent triangle. The triangle lattice also comes with two natural (non-orthogonal) projections and one may verify the conditions of Theorem \ref{thm:mainthm} in a similar fashion. 
This would give a triangular version, depicted in Figure \ref{fig:sierpinskitriangle}, of Theorem \ref{thm:ndimSiercarpet}. The details are left to the reader.
\end{remark}

\bibliographystyle{amsplain}

\vfill
\end{document}